\setlist[enumerate]{itemsep=1mm}
\providecommand{\keywords}[1]
{
  \small	
  \textbf{\textit{Keywords---}} #1
}
\definecolor{codedarkgreen}{RGB}{51, 133, 4}
\definecolor{codemaroon}{RGB}{133, 5, 63}
\definecolor{codeteal}{RGB}{0, 128, 96}
\lstdefinelanguage{Macaulay2}{
        classoffset=1,
        keywords={radical, ideal, map, kernel},
        keywordstyle={\color{blue}},
        classoffset=2,
        morekeywords={from, to, list},
        keywordstyle={\color{codemaroon}},
        classoffset=3,
        morekeywords={QQ},
        keywordstyle={\color{codedarkgreen}},
        classoffset=4,
        morekeywords={MonomialOrder},
        keywordstyle={\color{codeteal}}
}
\lstdefinelanguage{Julia}{
classoffset=1,
keywords={mixed_volume, using},
keywordstyle={\color{blue}}
}
\newcommand{\CC}{\mathbb{C}}
\newcommand{\NN}{\mathbb{N}}
\newcommand{\PP}{\mathbb{P}}
\newcommand{\ZZ}{\mathbb{Z}}
\theoremstyle{plain}
\newtheorem{thm}{Theorem}[section]
\newtheorem{lem}[thm]{Lemma}
\newtheorem{cor}[thm]{Corollary}
\newtheorem{prop}[thm]{Proposition}
\theoremstyle{definition}
\newtheorem{rem}[thm]{Remark}
\newtheorem{ex}[thm]{Example}
\newtheorem{definition}[thm]{Definition}
\title[Khovanskii bases for systems of polynomial equations]{Khovanskii bases for semimixed systems of polynomial equations -- a case of approximating stationary nonlinear Newtonian dynamics}
\keywords{Khovanskii bases, BKK Theorem, fiber products, multigraded Hilbert functions}
\author[V.~Borovik]{Viktoriia Borovik}
\address[Borovik]{University Osnabr\"uck,
Fachbereich Mathematik/Informatik
Albrechtstr.~28a,
49076 Osnabrück, Germany.}
\email{vborovik@uni-osnabrueck.de}
\author[P.~Breiding]{Paul Breiding}
\address[Breiding]{University Osnabr\"uck,
Fachbereich Mathematik/Informatik
Albrechtstr.~28a,
49076 Osnabrück, Germany.}
\email{pbreiding@uni-osnabrueck.de}
\thanks{V.B.\ and P.B.: Supported by Deutsche Forschungsgemeinschaft (DFG). Projektnr.\ 445466444.}
\author[J.~del~Pino]{Javier del Pino}
\address[del Pino]{Institute for Theoretical Physics, ETH Zürich, 8093 Zürich, Switzerland}
\email{jdelpino@phys.ethz.ch}
\thanks{}
\author[M.~Micha{\l}ek]{Mateusz Micha{\l}ek}
\address[Micha{\l}ek]{University of Konstanz, Dept.~of Mathematics and Statistics, Universit\"atsstr.~10, 78457 Konstanz, Germany.}
\thanks{M.M.: Supported by the DFG grant  467575307}
\email{mateusz.michalek@uni-konstanz.de}
\author[O.~Zilberberg]{Oded Zilberberg}
\address[Zilberberg]{University of Konstanz, Dept.~of Physics and Statistics, Universit\"atsstr.~10, 78464 Konstanz, Germany.}
\email{oded.zilberberg@uni-konstanz.de}
\thanks{}
\date{}
\begin{document}

\begin{abstract}
We provide an  approach to counting roots of polynomial systems, where each polynomial is a general linear combination of prescribed, fixed polynomials. Our tools rely on the theory of Khovanskii bases, combined with toric geometry, the Bernstein–Khovanskii–Kushnirenko (BKK) Theorem, and fiber products. 

As a direct application of this theory, we solve the problem of counting the number of approximate stationary states for coupled driven nonlinear resonators. 
We set up a system of polynomial equations that depends on three numbers $N,n$ and $M$ and whose solutions model the stationary states. The parameter $N$ is the number of coupled resonators, $2n-1$ is the degree of nonlinearity of the underlying differential equation, and $M$ is the number of frequencies used in the approximation. We use our main theorems, that is, the generalized BKK Theorem \ref{updatedBKK} and the Decoupling Theorem \ref{thm: Decoupling}, to count the number of (complex) solutions of the polynomial system for an arbitrary degree of nonlinearity $2n-1\geq 3$, any number of resonators $N\geq 1$, and $M=1$ harmonic. We also solve the case $N=1, n=2$ and $M =2$ and give a computational way to check the number of solutions for $N=1, n=2$ and $M >2$. This extends the results of~\cite{oscillators}. 
\end{abstract}

\maketitle

       \section{Introduction}

Counting solutions of a polynomial system is a common problem in mathematics with a broad variety of practical applications. However, equations in practical applications are usually highly structured, and general root counts such as Bézout's~\cite[Theorem 2.16]{jaBernd} or Bernstein–Khovanskii–Kushnirenko (BKK) Theorem~\cite{Bernshtein1975TheNO} tend to overestimate the actual number of solutions. The goal of this article is to develop techniques for counting complex solutions of such structured polynomial systems. 

One of our main findings is Theorem~\ref{updatedBKK}, which generalizes the BKK theorem. We summarize the statement of the theorem. Let
\begin{equation}\label{pol_system}
f_1^{(1)}(\mathbf x) = \cdots = f_{k_1}^{(1)}(\mathbf x)  = \cdots = f_1^{(r)}(\mathbf x)  =\cdots = f_{k_r}^{(r)}(\mathbf x) =0
\end{equation}
be a system of $m=k_1+\cdots+k_r$ polynomials in $m$ variables $\mathbf x = (x_1,\ldots,x_m)$, that are partitioned into $r$ blocks $f_1^{(i)},\ldots,f_{k_i}^{(i)}$ for $1\leq i\leq r$. Assume that there are~$r$ distinct finite sets of polynomials $\mathcal A_1,\ldots,\mathcal A_r$ such that the polynomials in the $i$-th block are linear combinations of those in $\mathcal A_i$. 
In Theorem~\ref{updatedBKK}, we establish conditions under which the number of complex solutions of the system~(\ref{pol_system}) can be expressed as a mixed volume of $m$ polytopes. To achieve this, we utilize the theory of Khovanskii bases~\cite{KM19}. A notable outcome of this theorem is Theorem~\ref{updatedKush}, which applies when there is only one block ($r=1$) and the polynomials in (\ref{pol_system}) are all linear combinations of the same set of polynomials.

We apply our results to a class of polynomial systems that we obtain from studying the stationary motion of driven coupled nonlinear resonators. The latter involves a set of time-dependent nonlinear ordinary differential equations that arise across multiple fields of natural science~\cite{Krack_2019,hb_julia_code}. The dynamics of such complex systems are approximated using a so-called Harmonic Balance ansatz~\cite{Krack_2019}. The condition for stationary motion entails a system of polynomial equations \cite{hb_julia_code} that depends on three parameters $N,n$ and $M$. The parameter $N$ is the number of coupled resonators, $2n-1$ is the degree of nonlinearity of the underlying differential equation, and $M$ is the number of frequencies used in the approximation. We use our main theorems (Theorems \ref{updatedBKK} and  \ref{updatedKush}) to count the number of (complex) solutions of the polynomial system for an arbitrary degree of nonlinearity $2n-1\geq 3$, any number of resonators $N\geq 1$, and $M=1$ frequency. We also solve the case $N=1, n=2$ and $M=2$. This extends the result in~\cite{oscillators}, where the authors computed the number of solutions in the case $n=2, M=1$ and $N$ arbitrary.

When we have at least two coupled resonators, the resulting polynomial system has a particular structure that we exploit further. In this setting, we can find another partition of the set~$\mathcal{A}:=\bigcup_{i=1}^r \mathcal{A}_i$ into $d$ sets $\mathcal{A}=\bigcup_{j=1}^d\mathcal{B}_j$ that do not share variables. We prove what we call the \emph{Decoupling Theorem} (Theorem \ref{thm: Decoupling}). More precisely, this theorem says that verifying a Khovanskii basis for (\ref{pol_system}) can be done by verifying a Khovanskii basis independently for each set $\mathcal{B}_j$, $j=1,...,d$. In our case, this significantly reduces the complexity of the problem, because we can ``decouple'' the involved computations and break them into simpler tasks.

Many of our motivations come from physics while our computational methods come from algebra and algebraic geometry. We therefore make an effort to include all basic definitions that are used throughout the article. Still, we advise the reader to consult~\cite[Chapters 1 and 2]{jaBernd} or \cite{cox1994ideals} for basics about ideals, rings and varieties. Readers not familiar with Gr\"obner bases or monomial algebras are referred to~\cite{sturmfels1996grobner} and \cite[Chapters 8 and 13.3]{jaBernd}. A very beautiful and comprehensive presentation of toric varieties, as well as basics of algebraic geometry, can be found in~\cite{CoxLittleSchenck}.

\subsection*{Outline}
The paper is structured as follows:  in Section~\ref{Section2}, we review the fundamental concepts of Khovanskii bases and methods for counting complex roots of polynomial systems, including the proof of Theorem~\ref{updatedBKK}. Section~\ref{Section3} is dedicated to exploring fiber products of unirational varieties and establishing the Decoupling Theorem (Theorem \ref{thm: Decoupling}). These two sections comprise the algebra part of the paper.

The remaining sections address the problem of approximating stationary nonlinear dynamics.
Section~\ref{Section4} involves the derivation of polynomial systems for the approximate stationary motion of coupled nonlinear resonators. In Section \ref{Section5}, we find a Khovanskii basis and the number of complex solutions for the nonlinear resonators problem with high nonlinearities ($N,n$ arbitrary and $M=1$) and apply Theorem~\ref{updatedKush}. Lastly, in Section \ref{Section6}, we apply Theorem~\ref{updatedBKK} to the theory of resonator networks with $N=1,n=2$ and $M=2$. 


\subsection*{Notation}
We write $\PP^a$ for the $a$-dimensional projective space, that is the space of lines in~$\CC^{a+1}$ through the origin. We interchangeably write $a_1\cdots a_k =\prod_{i=1}^k a_i$.
We use boldface to denote a sequence of elements, e.g.~$\mathbf{x}=(x_1,\dots,x_n)$. 
\bigskip
\section{Khovanskii bases and root counting}\label{Section2}

In this section, we recall the key definitions and properties of Khovanskii bases, as well as how we can use them for counting complex solutions of polynomial systems. For more details see~\cite{KKhov12} or \cite{KM19}.
 
Assume we have a polynomial system of $m$ equations 
\begin{equation}\label {simplesystem}
 f_1(\mathbf{x}) = \cdots = f_m(\mathbf{x}) = 0   
\end{equation}
in $m$ variables $\mathbf x = (x_1,\ldots,x_m)$, and we are interested in the number of its complex solutions. The BKK Theorem is the result of a series of articles~\cite{Kush1975, Bernshtein1975TheNO, Khovanskii1978NewtonPA}. It provides a way for computing an upper bound for the number of solutions in the torus $(\CC^{\times})^m$, where $\CC^{\times} = \CC\setminus \{0\}$. For the statement of the theorem, we recall that the \emph{Newton polytope} of a multivariate polynomial is $f=\sum_{\alpha\in A} c_\alpha \mathbf x^\alpha$, where $c_\alpha\neq 0$, is a convex hull of the exponents vectors $\alpha\in A\subset \mathbb Z^m$.  Given~$m$ polytopes~$P_1,\ldots,P_m\subset \mathbb R^m$ consider the Minkowski sum 
$$P(\boldsymbol \lambda):=\lambda_1 P_1 + \cdots + \lambda_m P_m,$$ where~$\boldsymbol \lambda =(\lambda_1,\ldots,\lambda_m)$. The scaled volume $m! \cdot  \mathrm{vol}(P(\boldsymbol \lambda))$ is a homogeneous polynomial of degree $m$ in~$\boldsymbol \lambda$, and the coefficient of the monomial $\lambda_1\cdots\lambda_m$ is called the \emph{mixed volume} of the~$P_i$'s,  denoted as $\mathrm{MV}(P_1, \ldots, P_m)$. For more information about the mixed volume of polytopes, we refer to~\cite{MixVol}.
\begin{thm}[The BKK Theorem]
\label{thm:BKK}
Let $f_1(\mathbf{x}) = \cdots =f_m(\mathbf{x}) = 0$ in $(\CC^{\times})^m$ be a system of polynomial equations. For every $1\leq i\leq m$ let $P_i$ denote the Newton polytope of $f_i$. 
\begin{enumerate}
\item 
The number of isolated zeros in~$(\CC^{\times})^m$ of this system, counting multiplicities, is bounded above by the mixed volume $\mathrm{MV}(P_1, \ldots, P_m)$. 
\item There is a proper algebraic subvariety in the space of coefficients of monomials appearing in the $f_i$'s, such that when the coefficients of the system are outside this subvariety, the number of isolated solutions in $(\CC^{\times})^m$ is exactly $\mathrm{MV}(P_1, \ldots, P_m)$ (we call such a system ``general'').
\item 
In particular, when $P:=P_1 =\ldots =P_m$, then the number of solutions of the general system is equal to $m!\cdot\mathrm{vol}(P)$.
\end{enumerate}
\end{thm}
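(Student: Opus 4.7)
The plan is to reduce the theorem to intersection theory on a smooth projective toric variety and then invoke the identity between top intersection numbers of nef line bundles and mixed volumes of polytopes. First, I will construct a smooth projective toric variety $X_\Sigma$ whose fan $\Sigma$ is a common refinement of the normal fans of all the polytopes $P_1, \ldots, P_m$, so that each $P_i$ determines a nef line bundle $L_i$ on $X_\Sigma$. After multiplying $f_i$ by a suitable Laurent monomial, it extends to a global section $s_i \in H^0(X_\Sigma, L_i)$, and the common zeros of the system in the open torus orbit $(\CC^\times)^m \subset X_\Sigma$ correspond bijectively to intersection points of the divisors $\{s_i = 0\}$ inside that orbit.

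The key step is to establish the toric identity $L_1 \cdots L_m = \mathrm{MV}(P_1, \ldots, P_m)$ in the Chow ring of $X_\Sigma$. The standard derivation proceeds by polarization from Kushnirenko's single-polytope formula $L^m = m! \, \mathrm{vol}(P)$: introducing $L(\boldsymbol\lambda) = \sum_i \lambda_i L_i$, whose associated polytope is $\sum_i \lambda_i P_i$, expanding $L(\boldsymbol\lambda)^m$ as a polynomial in $\boldsymbol\lambda$, and comparing coefficients with $m! \, \mathrm{vol}(\sum_i \lambda_i P_i)$ identifies the mixed intersection number as the coefficient of $\lambda_1 \cdots \lambda_m$, which by definition is the mixed volume. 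Part (3) is then immediate from the special case $P_i = P$. For part (2), iteratively applying Bertini to the base-point-free linear systems $|L_i|$ shows that for generic sections the divisors meet transversely in $\mathrm{MV}(P_1, \ldots, P_m)$ reduced points, and a dimension count on each toric boundary stratum ensures that for generic coefficients these points all lie in the open torus orbit. Part (1) follows by conservation of intersection number: any specific system deforms through a one-parameter family into a generic one; the total intersection on the compact variety $X_\Sigma$ is preserved, and along the specialization isolated torus solutions can only collide (gaining multiplicity) or escape to the boundary, so the weighted torus count is bounded above by the generic value.

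The main obstacle is proving the toric identity $L_1 \cdots L_m = \mathrm{MV}(P_1, \ldots, P_m)$ from first principles, which reduces to Kushnirenko's single-polytope formula $L^m = m! \, \mathrm{vol}(P)$. This in turn requires either an asymptotic Riemann--Roch argument identifying $\dim H^0(X_\Sigma, kL)$ with the lattice points of $kP$, or a direct combinatorial proof via a mixed subdivision of the polytopes. A secondary and more delicate issue is the genericity analysis on the toric boundary: one must show that for each proper face of the Newton polytopes, the restriction of the sections to the corresponding boundary stratum cannot produce extra intersection points there, and this is precisely where the algebraic condition that the coefficient tuple lie outside an explicit proper subvariety enters the proof of part (2).
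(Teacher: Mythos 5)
The paper does not prove this theorem at all: its ``proof'' is the single line ``See [BKK, Theorem 1.2]'', so there is no argument of the paper's to compare yours against. On its own merits, your outline is the standard toric-geometric proof of BKK (compactify $(\CC^{\times})^m$ to a smooth toric variety refining the normal fans, realize each $f_i$ as a section of the nef line bundle $L_i$ attached to $P_i$, identify $L_1\cdots L_m$ with $\mathrm{MV}(P_1,\ldots,P_m)$ by polarizing Kushnirenko's formula, use Bertini for genericity and conservation of number for the upper bound), and the logic is sound, including the observation that nefness makes the contributions of excess components nonnegative so that isolated torus solutions are bounded by the total intersection number. It is, however, a strategy rather than a proof: as you yourself flag, the two substantive inputs --- Kushnirenko's identity $L^m=m!\,\mathrm{vol}(P)$ (via asymptotic Riemann--Roch/Ehrhart counting) and the verification that for coefficients outside an explicit proper subvariety no intersection points or excess components sit on the toric boundary (Bernstein's face conditions) --- are deferred, and these are precisely where all the work in a complete proof lies. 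Since the paper treats the theorem as a black box from the literature, citing a reference as it does would be the appropriate thing to do here; if you do want to write the proof out, those two deferred steps are what must be supplied. One small point of care: the coefficient of $\lambda_1\cdots\lambda_m$ in $L(\boldsymbol\lambda)^m$ is $m!\,L_1\cdots L_m$, so when comparing with $m!\cdot\mathrm{vol}\bigl(\sum_i\lambda_i P_i\bigr)$ you must track the normalization of $\mathrm{MV}$ consistently (the convention in which $\mathrm{MV}(P,\ldots,P)=m!\,\mathrm{vol}(P)$) to land on $L_1\cdots L_m=\mathrm{MV}(P_1,\ldots,P_m)$ rather than a version off by $m!$.
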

\begin{proof}
See \cite[Theorem 1.2]{BKK}.
\end{proof}

\begin{rem}
The BKK Theorem holds for the more general case of \emph{Laurent polynomials}; i.e., polynomials with exponent vectors in $\mathbb Z^m$. 
\end{rem}

Theorem~\ref{thm:BKK} implies that the number of zeros of most polynomial systems is given by the mixed volume of their Newton polytopes. However, concrete examples are often not general and have the property that their number of solutions is much smaller than this mixed volume. Khovanskii bases can sometimes be used to solve this issue and provide a tighter upper bound. In the following, we first define the notion of Khovanskii basis and then prove a generalization of the BKK Theorem for Khovanskii bases (see Theorem~\ref{updatedBKK} below).

Let us denote the ring of polynomials in $\mathbf x$ with coefficients in the complex numbers $\mathbb C$ by $\CC[\mathbf x] = \CC[x_1,\ldots ,x_m]$  and consider $r$ distinct sets of polynomials 
$$\mathcal A_j = \{ h_{j,0},\ldots ,h_{j,a_j}\} \subset \CC[\mathbf x],\quad 1\leq j\leq r.$$
Assume that system~\eqref{simplesystem} consists of $r$ blocks with $k_j$ equations in each block. After relabeling we get (as in \eqref{pol_system}):
\begin{equation}\label{simplesystem_blocks}
f_1^{(j)}(\mathbf x) = \cdots = f_{k_j}^{(j)}(\mathbf x)  =0, \qquad j=1,\ldots, r.
\end{equation}
Furthermore, assume that each equation in the block $j$ is a linear combination of polynomials from $\mathcal A_j$:
\begin{equation}\label{linearcombination}
f_k^{(j)}(\mathbf x) = \sum\limits_{l=0}^{a_j} \lambda_{k,l}\, h_{j,l}(\mathbf x),\quad 1\leq k\leq k_j\,,  
\end{equation} 
where $\lambda_{k,l}\in\CC$ are coefficients.
As in \cite{BerndPolyhedralHomotopy}, we will call the system~\eqref{simplesystem_blocks} \textit{unmixed} when $r=1$, \textit{fully mixed} when $r=n$, and \textit{semimixed} for arbitrary $r$. We will now concentrate on semimixed systems, since fully mixed and unmixed systems are special cases of them. 

We write
$$\mathbf{P}^{a_1, \ldots , a_r} := \PP^{a_1}\times\dots\times \PP^{a_r}$$
and consider the rational map:
\begin{align*}
    \phi:\CC^m &\dashrightarrow \mathbf{P}^{a_1, \ldots , a_r}\\ 
    \mathbf{x}&\mapsto
    \big([h_{1,0}(\mathbf x) : \ldots : h_{1,a_1}(\mathbf x)],\ \dots\ ,\ [h_{r,0}(\mathbf x) : \ldots : h_{r,a_r}(\mathbf x)]\big).
\end{align*} 
The dashed arrow indicates that the map is defined outside the joint zero locus of~$\{h_{j,l}\}_{l=1}^{a_j}$ for each $j = 1, \ldots , r$. Next, we fix a monomial order~$\succ$ on $\CC[\mathbf x]$ and consider the corresponding map of leading terms relative to this order:
\begin{align*}
   &\phi_\mathrm{in}: \CC^m \dashrightarrow \mathbf{P}^{a_1, \ldots , a_r}\\ 
    &\mathbf{x}\mapsto
    \big([\mathrm{LT}(h_{1,0}(\mathbf x)):\ldots:\mathrm{LT}(h_{1,a_1}(\mathbf x))],\ \dots\ ,\ [\mathrm{LT}(h_{r,0}(\mathbf x)):\ldots:\mathrm{LT}(h_{r,a_r}(\mathbf x))]\big).
    \end{align*}
The Zariski closures, which are also the Euclidean closures in this case, of the images of these two maps are denoted by
$$X:= \overline{\operatorname{im} \phi} \quad \text{and}\quad Y:= \overline{\operatorname{im} \phi_\mathrm{in}}.$$

Without loss of generality, we can assume that homogeneous coordinate rings of varieties $X$ and $Y$ are embedded, as multigraded $\CC$-algebras, into a polynomial ring $\CC[\mathbf s, \mathbf x]$, where $\mathbf s = (s_1,\ldots,s_r)$. Denote
\begin{equation*}
    S:= \bigoplus\limits_{\alpha \in \NN^r}\mathbf s^{\alpha} V^{\alpha}
\end{equation*}
with $\mathbf s^{\alpha} = s_1^{\alpha_1}\cdots s_r^{\alpha_r}$, $V^{\alpha} = V_1^{\alpha_1}\cdots V_r^{\alpha_r}$, where $V_j = \operatorname{span}_{\CC}\{ \mathcal{A}_j\}$ and $V_j^{\alpha_j}$ is spanned by all $\alpha_j$-fold products of elements in $V_j$. We extend the monomial order on~$\CC[\mathbf x]$ to a degree compatible monomial order on $\CC[\mathbf s, \mathbf x]$, i.e. $\mathbf s^{\alpha}\mathbf x^{a} \succ \mathbf s^{\beta}\mathbf x^{b}$ whenever we have~$\alpha \succ \beta$ or $\alpha = \beta, a \succ b$. We slightly abuse the notation using $\succ$ as the monomial order on $\CC[\mathbf x]$ and $\CC[\mathbf s, \mathbf x]$ and $\CC[\mathbf s]$, using canonical inclusions, as well as on $\NN^m$, $\NN^{r+m}$ and $\NN^r$.
Then, the \textit{initial algebra} is spanned by the leading terms of all elements of $S$:
$$S_\mathrm{in} = \operatorname{span}_{\CC}\{\mathrm{LT}(f) \mid f \in S\}.$$
\begin{definition}[{\bf Khovanskii basis}]\label{def_Khovanskii_basis}
A subset of elements 
$G\subset S$ is called {\it a Khovanskii basis} if
the leading monomials of the elements in $G$ generate the initial algebra~$S_\mathrm{in}$.
\end{definition}
\begin{rem}
Khovanskii bases are defined for more general objects than subalgebras of polynomial rings through the concept of valuations. Nevertheless, in this paper, we work only with unirational varieties and deal only with subalgebras of polynomial rings, so in our case we always use a valuation coming from a monomial order on $\CC[\mathbf x]$.
\end{rem}

The following maps are called the \emph{Hilbert functions} of varieties $X$ and $Y$, respectively.
$$H_X\colon \ZZ_{\geq 0}^r \to \ZZ_{\geq 0}, \quad \mathbf{\alpha} = (\alpha_1, \ldots , \alpha_r) \mapsto \dim_{\CC}(\CC[X]_{\mathbf{\alpha}}),$$
$$H_Y\colon \ZZ_{\geq 0}^r \to \ZZ_{\geq 0}, \quad \mathbf{\alpha} = (\alpha_1, \ldots , \alpha_r) \mapsto \dim_{\CC}(\CC[Y]_{\mathbf{\alpha}}),$$
where the subscript $\mathbf{\alpha}$ denotes the part of the ring of degree $\mathbf{\alpha}$ and $\deg s^{\mathbf{\alpha}}\mathbf x^{a}=\alpha$, i.e.~the grading is only through the $\mathbf s$ variables. 

Just as in the univariate case, the functions $H_X(\alpha)$, $H_Y(\alpha)$ coincide with the integer-valued polynomials $HP_X(\alpha)$, $HP_Y(\alpha)$ for $\alpha \gg 0$, which can be written as follows (see \cite[Theorem 3.1]{CIDRUIZ2021136}):
\begin{align*}
HP_X (\alpha) &= \sum\limits_{i_1, \ldots , i_r \geq 0}\mathbf e(i_1, \ldots , i_r){\alpha_1 + i_1 \choose i_1} \cdots {\alpha_r + i_r \choose i_r},\\[0.2em]
HP_Y (\alpha) &= \sum\limits_{i_1, \ldots , i_r \geq 0}\mathbf e_\mathrm{in}(i_1, \ldots , i_r){\alpha_1 + i_1 \choose i_1} \cdots {\alpha_r + i_r \choose i_r},
\end{align*}

For all tuples $(i_1, \ldots , i_r)$ with $i_1 + \ldots  + i_r = \dim X$ the numbers $\mathbf e(i_1, \ldots , i_r)  \geq 0$ are called the \textit{mixed multiplicities} of $\CC[X]$ or \textit{multidegrees} of $X$. The geometric interpretation of these numbers is that they are equal to the number of points in the intersection of $X$ with the product 
$$L_1 \times \cdots \times L_r \subset \mathbf{P}^{a_1, \ldots , a_r},$$ where $L_j \subseteq \PP^{a_j}$ is a general linear subspace of codimension
$i_j$ for each $1 \leq j \leq~r$ (see \cite[Theorem 4.7]{CIDRUIZ2021136}). If the $L_j$'s are not general, the number of isolated points in the intersection $X\cap (L_1 \times \cdots \times L_r)$, is bounded by this multidegree.

Similarly, we call the numbers  $\mathbf e_\mathrm{in}(i_1, \ldots , i_r)$  multidegrees of $Y$, and we have a corresponding interpretation of the multidegrees as the number of points in the intersection of $Y$ with a product of linear subspaces.

The following theorem generalizes the BKK Theorem to Khovanskii bases.
\begin{thm}\label{updatedBKK}
Suppose that we have a semimixed system of polynomials as in Equation \eqref{simplesystem_blocks} with blocks of size $(k_1,\ldots,k_r)$ and that
    \begin{enumerate}
        \item the maps $\phi$ and $\phi_{\mathrm{in}}$ are generically injective;
        \item the polynomials $\bigcup_{j=1}^{r}\{s_jh \mid h\in \mathcal{A}_j\}\subset \CC[\mathbf s, \mathbf x]$ form a Khovanskii basis for the subalgebra~$S$.
    \end{enumerate}
    Then the number of isolated solutions of Eqs.~\eqref{simplesystem_blocks} is bounded by 
    $$\mathbf e(k_1, \ldots , k_r) = \mathbf e_\mathrm{in}(k_1, \ldots , k_r) = \mathrm{MV}(Q_1[k_1],\ldots, Q_r[k_r]),$$
    where $Q_j$ is the convex hull of the exponents of $\{\mathrm{LT}(h) \mid h\in \mathcal{A}_j\}$ for every $1 \leq j \leq r$ and
    \begin{align*}
     \mathrm{MV}(Q_1[k_1],\ldots, Q_r[k_r]):= \mathrm{MV}(\underbrace{Q_1,\ldots ,Q_1}_{k_1},\ldots ,\underbrace{Q_r,\ldots ,Q_r}_{k_r}).  
    \end{align*}
    Furthermore, for general $\lambda_{k,l}\in\CC$ from Eq.~\eqref{linearcombination}, the number of solutions of Eqs.~\eqref{simplesystem_blocks} is equal to $\mathrm{MV}(Q_1[k_1],\ldots, Q_r[k_r])$.
\end{thm}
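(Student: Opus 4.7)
The plan is to interpret the root-count as an intersection problem on the multiprojective variety $X$, degenerate $X$ to the toric variety $Y$ using the Khovanskii-basis hypothesis, and then invoke the classical toric formula identifying multidegrees of $Y$ with mixed volumes.

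First, I would translate solving \eqref{simplesystem_blocks} into intersection theory on $X \subset \mathbf{P}^{a_1, \ldots, a_r}$. Each equation $f_k^{(j)} = \sum_l \lambda_{k,l}\, h_{j,l} = 0$ is the pullback under $\phi$ of the hyperplane $H_k^{(j)} \subset \PP^{a_j}$ with equation $\sum_l \lambda_{k,l} z_{j,l} = 0$. Setting $L_j := \bigcap_{k=1}^{k_j} H_k^{(j)} \subset \PP^{a_j}$, a linear subspace of codimension $k_j$, the solutions (away from the base locus of $\phi$) biject, via the generically injective map $\phi$, with $X \cap (L_1 \times \cdots \times L_r)$ for generic $\lambda_{k,l}$. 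By the geometric interpretation of the multidegrees recalled above, this intersection has at most $\mathbf{e}(k_1, \ldots, k_r)$ points, with equality for sufficiently general $L_j$'s, which is achieved by sufficiently general $\lambda_{k,l}$.

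Next, the identity $\mathbf{e}(k_1, \ldots, k_r) = \mathbf{e}_\mathrm{in}(k_1, \ldots, k_r)$ is a consequence of assumption (2): the Khovanskii-basis property of $\bigcup_j \{s_j h : h \in \mathcal{A}_j\}$ implies that every multigraded component satisfies $\dim_\CC S_\alpha = \dim_\CC (S_\mathrm{in})_\alpha$, so $H_X \equiv H_Y$, the Hilbert polynomials coincide, and all mixed multiplicities agree. Finally, I would compute $\mathbf{e}_\mathrm{in}(k_1, \ldots, k_r)$ using the toric structure of $Y$: since $Y$ is the closure of the image of the monomial map $\phi_\mathrm{in}$, it is a multiprojective toric variety whose $j$-th defining polytope is $Q_j$. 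The multidegree $\mathbf{e}_\mathrm{in}(k_1, \ldots, k_r)$ counts points of $Y \cap (L_1 \times \cdots \times L_r)$ for generic linear subspaces; pulling these sections back by $\phi_\mathrm{in}$ yields a Laurent polynomial system whose $j$-th block consists of $k_j$ generic linear combinations of the monomials in $\mathrm{LT}(\mathcal{A}_j)$. Applying Theorem~\ref{thm:BKK} to this polyhedral system, together with generic injectivity of $\phi_\mathrm{in}$, yields the mixed volume $\mathrm{MV}(Q_1[k_1], \ldots, Q_r[k_r])$.

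The main obstacle will be to verify rigorously that the Khovanskii-basis condition, a statement about the $\NN^{r}$-graded algebra $S$, implies equality of \emph{multidegrees} (and not merely of ordinary degrees) between $X$ and $Y$. A subtler point is ensuring that generic injectivity is used correctly in both directions: for $\phi$, to pass from an intersection count to a root count on $X$; and for $\phi_\mathrm{in}$, to justify passage from the toric intersection count to the mixed volume without over- or under-counting contributions from positive-dimensional fibers or from components at infinity. Once these are settled, the chain $\#\{\text{solutions}\} \leq \mathbf{e}(k_1,\ldots,k_r) = \mathbf{e}_\mathrm{in}(k_1,\ldots,k_r) = \mathrm{MV}(Q_1[k_1],\ldots,Q_r[k_r])$, with equality for generic coefficients, follows automatically.
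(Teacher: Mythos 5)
Your proposal follows essentially the same route as the paper's proof: identify solutions with points of $X\cap(L_1\times\cdots\times L_r)$ via the generic injectivity of $\phi$, deduce $\mathbf e=\mathbf e_{\mathrm{in}}$ from the Khovanskii-basis hypothesis through the equality of the (multigraded) Hilbert functions of $S$ and $S_{\mathrm{in}}=\CC[Y]$, and evaluate $\mathbf e_{\mathrm{in}}$ as a mixed volume via the BKK Theorem applied to the toric variety $Y$. The "obstacles" you flag are exactly the points the paper settles by citing Eisenbud's Theorem 15.3 (equality of full multigraded Hilbert functions, hence of all mixed multiplicities) and the Cid-Ruiz/Khovanskii references for the geometric interpretation of multidegrees, so your argument is correct and not a genuinely different approach.
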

\begin{proof}
For each $j=1,\ldots ,r$ there is a natural correspondence between the choice of $\lambda_{k,l}$'s and the choice of linear subspaces of codimension $k_j$: 
\[L_j=\left\{(p_1,\ldots ,p_{a_j})\in \mathbb{P}^{a_j}\ \Bigm|\ \sum_{l=1}^{a_j} \lambda_{1,l} p_l=\ldots =\sum_{l=1}^{a_j} \lambda_{k_j,l} p_l=0\right\}.\]
If we pick a solution of the system \eqref{simplesystem_blocks}, its image is a point in $X\cap (L_1\times \dots\times L_r)$. On the other hand, as $L_j$'s are general, a point in $X\cap (L_1\times \dots\times L_r)$ is a general point of $X$ and hence, by assumption $1$, has a unique inverse image under $\phi$. This inverse image must be a solution to the system \eqref{simplesystem_blocks}. Consequently, the required number of solutions equals to $\mathbf e(k_1, \ldots , k_r)$. 

Assumption 2 implies  $S_\mathrm{in} = \CC[Y]$.  As the initial algebra~$S_\mathrm{in}$ and the algebra~$S$ have the same Hilbert function (see \cite[Theorem 15.3]{Eisenbud}), we obtain 
$$\mathbf e(k_1, \ldots , k_r) = \mathbf e_\mathrm{in}(k_1, \ldots , k_r).$$ The last equality follows from the BKK Theorem (see also \cite[Theorem 4.14]{Khovanskii_intersection_index}).
\end{proof}
\begin{rem}
Assumption 1 can be relaxed to the assumption that the maps $\phi$ and $\phi_\mathrm{in}$ have finite degrees, where by the degree of a map we mean the cardinality of a general fiber. In this case, the number of solutions is equal to 
$$\deg (\phi) \cdot \mathbf e(k_1, \ldots , k_r)\,,$$
and the mixed volume must be divided by the degree of the map $\phi_{\mathrm{in}}$.

We also note that the degree of the latter map equals the index of the sublattice generated by the exponents of the monomials $\mathrm{LT}(h_{j,0}(\mathbf x))^{-1} \cdot \mathrm{LT}(h_{j,l}(\mathbf x))$, where $1\leq l\leq a_j$, inside the lattice $\ZZ^m$.
\end{rem}
As a direct corollary, we obtain the following theorem for unmixed systems, an alternative proof of which can be found, for example, in \cite{oscillators}.
\begin{thm}\label{updatedKush}
Consider $a+ 1$ polynomials $h_0, \ldots, h_a$ such that in \eqref{simplesystem} we have 
$$f_i(\mathbf{x}) = \sum_{l=0}^a \lambda_{i,l} h_l(\mathbf{x}),\quad 1\leq i\leq m;$$
i.e., we have $r=1$ block. Let $Q$ be a convex hull of the exponents of $\mathrm{LT}(h_l)$ for~$l=0,\ldots,a$.
Assume that in this setting
\begin{enumerate}
        \item the maps $\phi$ and $\phi_{\mathrm{in}}$ are generically injective;
        \item the polynomials $\{sh_0, \ldots , sh_a\}\subset \CC[s, \mathbf x]$ form a Khovanskii basis for the subalgebra they generate.
    \end{enumerate}
Then, the number of solutions of the system \eqref{simplesystem} for general $\lambda_{i,l}\in\CC$ equals 
$$m!\cdot \mathrm{vol}(Q) = \deg X = \deg Y,$$
where, as above, $X = \overline{\operatorname{im} \phi}$ and $Y = \overline{\operatorname{im} {\phi}_\mathrm{in}}$ are the image closures of the polynomial maps $\phi(\mathbf{x}) =\left[h_{0}:\cdots:h_{a}\right]$ and $\phi_\mathrm{in}(\mathbf{x})= \left[\mathrm{LT}(h_{0}):\cdots:\mathrm{LT}(h_{a})\right]$, respectively.
\end{thm}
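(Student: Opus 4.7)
The plan is to deduce this statement as a direct specialization of Theorem~\ref{updatedBKK} with $r=1$, $k_1 = m$, followed by unpacking what the mixed multidegree $\mathbf{e}(m)$ and the mixed volume $\mathrm{MV}(Q[m])$ reduce to when there is only a single block. Since the hypotheses of Theorem~\ref{updatedKush} are exactly those of Theorem~\ref{updatedBKK} restricted to $r=1$, I can invoke it verbatim.

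First I would apply Theorem~\ref{updatedBKK} with $r=1$ and $k_1 = m$. This gives, for general coefficients $\lambda_{i,l}\in\CC$, that the number of isolated solutions of the system~\eqref{simplesystem} equals
\[
\mathbf{e}(m) \;=\; \mathbf{e}_\mathrm{in}(m) \;=\; \mathrm{MV}(Q[m]) \;=\; \mathrm{MV}(\underbrace{Q,\ldots,Q}_{m}).
\]
Next I would invoke the standard identity for mixed volumes of equal polytopes, namely $\mathrm{MV}(Q,\ldots,Q) = m!\cdot \mathrm{vol}(Q)$, which follows immediately from the definition of mixed volume as the coefficient of $\lambda_1\cdots\lambda_m$ in $m!\cdot\mathrm{vol}(\lambda_1 Q + \cdots + \lambda_m Q) = m!\cdot(\lambda_1+\cdots+\lambda_m)^m\,\mathrm{vol}(Q)$.

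Finally I would interpret $\mathbf{e}(m)$ and $\mathbf{e}_\mathrm{in}(m)$ as the degrees of $X$ and $Y$ respectively. Because $r=1$, the ambient product $\mathbf{P}^{a_1,\ldots,a_r}$ is a single projective space $\PP^a$, the Hilbert polynomial is univariate, and there is only one top-dimensional multidegree. Since $\phi\colon \CC^m \dashrightarrow \PP^a$ is generically injective, $\dim X = m$; by the geometric interpretation of multidegrees recalled just before Theorem~\ref{updatedBKK}, $\mathbf{e}(m)$ counts the points of $X\cap L$ for a general linear subspace $L\subseteq \PP^a$ of codimension $m=\dim X$, which is by definition $\deg X$. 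The same reasoning applied to $\phi_\mathrm{in}$ and $Y$ yields $\mathbf{e}_\mathrm{in}(m) = \deg Y$.

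I do not expect a genuine obstacle: the theorem is essentially a translation of Theorem~\ref{updatedBKK} into the $r=1$ vocabulary, and the only small point to check is that the generic injectivity of $\phi$ forces $\dim X = m$, so that the single multidegree indexed by $(m)$ is indeed the classical degree. If one wished to be self-contained, the identity $\mathrm{MV}(Q,\ldots,Q) = m!\,\mathrm{vol}(Q)$ and the equality $\mathbf{e}_\mathrm{in}(m) = m!\,\mathrm{vol}(Q)$ (which identifies $\deg Y$ with the normalized volume of the Newton polytope of the leading-term monomial algebra, i.e.\ a toric variety) could be stated as separate lemmas, but both are standard facts that do not require repeating here.
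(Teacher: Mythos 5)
Your proposal is correct and matches the paper's approach: the paper presents Theorem~\ref{updatedKush} precisely as a direct corollary of Theorem~\ref{updatedBKK} with $r=1$, and the details you supply (the identity $\mathrm{MV}(Q,\ldots,Q)=m!\cdot\mathrm{vol}(Q)$ and the identification of the single top multidegree with $\deg X$ and $\deg Y$ via intersection with a general codimension-$m$ linear subspace) are exactly the right ones to fill in.
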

To summarize, the two theorems above say that, using Khovanskii bases, we can replace each polynomial $h_i$ in the linear combination \eqref{linearcombination} by its leading term without changing the number of solutions. The new system will be a linear combination of monomials, for which we can apply the BKK Theorem. The following example shows that working with Khovanskii bases is necessary.
\begin{ex}
The polynomials $s(x^2+x), s(xy+y), s(y^2+1), s(y^3+2)\in\CC[s,x,y]$ do not form a Khovanskii basis. If we take two general linear combinations of $x^2+x$, $xy+y$, $y^2+1$, $y^3+2$, the system will have six solutions. However, if we take two general linear combinations of the leading monomials $x^2, xy, y^2, y^3$, we always obtain the solution $(0,0)$ and there will be two other solutions. We note that these are the leading monomials with respect to any term order.
\hfill $\diamond$ 
\end{ex}

\begin{rem}
 The mixed volume satisfies the so-called monotonicity property:
$\mathrm{MV}(P_1, \ldots , P_m) \leq \mathrm{MV}(Q_1,\ldots , Q_m)$, where $P_i \subseteq Q_i$. Hence, we always have 
$$\mathrm{MV}(P_1, \ldots , P_m) \leq m!\cdot \mathrm{vol}(\mathrm{conv}(P_1, \ldots , P_m)).$$
So the bound from Theorem \ref{updatedBKK} is tighter than in Theorem \ref{updatedKush}. Nevertheless, it can be more convenient in practice to assume that the system is unmixed by checking the simpler condition from Theorem \ref{updatedKush}. 
\end{rem}

The following example illustrates a case when the condition of Theorem \ref{updatedKush} is satisfied, but that of Theorem \ref{updatedBKK} is not.
\begin{ex}
Consider two polynomial equations
$$\lambda_1 + \lambda_2y + \lambda_3 (x^3 + xy^2) = \lambda_4 + \lambda_5x + \lambda_6 (x^2y + y^3)=0$$
in the two variables $(x,y)$ and with coefficients $\lambda_1,\ldots,\lambda_6\in\CC$.
The polynomials $s_1,\ s_1y,\ s_1(x^3 + xy^2),\ s_2,\ s_2x,\ s_2(x^2y + y^3)\in \CC[s_1,s_2,x,y]$ do not form a Khovanskii basis for the subalgebra they generate for any term order. On the other hand, the polynomials $s,\ sx,\ sy,\ s(x^3 + xy^2),\ s(x^2y + y^3)\in \CC[s,x,y]$ do form a Khovanskii basis for any term order.
\hfill $\diamond$ 
\end{ex}

At the end of this section, we recall a theorem of Kaveh and Manon that will help us to prove when a set of polynomials is  a Khovanskii basis. The result is based on the \emph{subduction algorithm}. For the statement, recall that a homogeneous ideal $I$ in a graded ring $R= \bigoplus_\alpha R_\alpha$ is an ideal generated by homogeneous elements; i.e., by elements that are contained in one of the $R_\alpha$.
\begin{thm}\label{thmKavehManon}
Let $Y$ be defined as above and $\{p_1, \ldots, p_k\}$ be generators of the homogeneous ideal $I(Y)$. Then 
$G:=\bigcup_{j=1}^{r}\{s_jh \mid h\in \mathcal{A}_j\}$ is a Khovanskii basis for the subalgebra~$S$, if and only if the remainder of $$p_l(s_1h_{1,0},\ \ldots ,\ s_1h_{1,a_1},\ \ldots ,\ s_rh_{r,0},\ \ldots ,\ s_rh_{r,a_r})$$ under subduction by $G$ computed in Algorithm~$\ref{subduction}$ is zero for all $l \in \{ 1, \ldots, k\}$.
\end{thm}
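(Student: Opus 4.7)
The plan is to prove the two directions separately, with the subduction algorithm as the main bookkeeping tool; this is essentially the Khovanskii-basis analogue of Buchberger's criterion for Gr\"obner bases, carried out in the multigraded setting dictated by the $\mathbf{s}$-variables.

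For the forward direction, assume $G$ is a Khovanskii basis. Each $F_l := p_l(s_1 h_{1,0},\ldots,s_r h_{r,a_r})$ lies in $S$ since $S$ is, by construction, generated as a $\CC$-algebra by the elements of $G$. Because $G$ is a Khovanskii basis, the initial algebra $S_\mathrm{in}$ is generated by $\{\mathrm{LT}(s_j h_{j,i})\}$, so the leading monomial of any nonzero element of $S$ is expressible as a product of such generator leading monomials. One step of subduction on $F \in S$ thus replaces $\mathrm{LT}(F)$ by a strictly smaller leading monomial, and well-ordering of $\succ$ forces the procedure to terminate at zero. Applied to $F_l$, this gives the stated conclusion.

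For the reverse direction, assume every $F_l$ subducts to $0$, and take an arbitrary $F \in S$. I would write $F = q(s_1 h_{1,0},\ldots,s_r h_{r,a_r})$ for some polynomial $q \in \CC[\mathbf{z}]$ and set $M := \max_\succ \{\mathbf{z}^\beta \mapsto \prod \mathrm{LT}(s_j h_{j,i})^{\beta_{j,i}} : \beta \text{ appears in } q\}$. Two cases arise. If only one $\beta$ achieves $M$, then $\mathrm{LT}(F)$ equals that product of leading generator terms and lies in the algebra generated by $\{\mathrm{LT}(s_j h_{j,i})\}$, so one subduction step strictly lowers $M$. If several $\beta$'s achieve $M$ with a cancellation on the leading-term side, then the sum $q_M$ of those terms (regarded as a polynomial in $\mathbf{z}$) vanishes on $Y$ after substituting $z_{j,i} \mapsto \mathrm{LT}(s_j h_{j,i})$, hence $q_M \in I(Y)$. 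Writing $q_M = \sum_l g_l \, p_l$ with $g_l \in \CC[\mathbf{z}]$ and substituting back, the hypothesis that each $p_l(\mathbf{s},\mathbf{x})$ subducts to zero lets me rewrite the contribution of $q_M$ in $F$ using elements of $G$ whose leading products are strictly smaller than $M$, producing a new presentation of $F$ with strictly smaller invariant $M$.

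The argument then proceeds by Noetherian induction on $M$ with respect to the (well-ordered) monomial order, which terminates because $M$ strictly decreases at each step. At termination, $\mathrm{LT}(F)$ is expressed as a product of $\mathrm{LT}(s_j h_{j,i})$, proving $\mathrm{LT}(F) \in \mathrm{in}(S) = \langle \mathrm{LT}(s_j h_{j,i})\rangle_{\text{alg}}$, which is exactly the Khovanskii basis condition.

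The main obstacle is the second case of the reverse direction: one must argue that whenever cancellation of top-degree monomials occurs in the product representation of $F$, the cancellation is controlled by a relation in $I(Y)$, and that subductability of the ideal generators $p_l$ transfers to subductability of an arbitrary relation $q_M$ via the decomposition $q_M = \sum g_l p_l$. Making this last step precise — ensuring that multiplying a subductable element by $g_l(s_j h_{j,i})$ preserves subductability and that the bookkeeping respects the degree-compatible extension of $\succ$ to $\CC[\mathbf{s},\mathbf{x}]$ — is where the care is needed; it is exactly analogous to the $S$-polynomial step in Buchberger's criterion, and the degree-compatibility of the monomial order ensures the induction is well-founded.
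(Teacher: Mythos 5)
The paper does not prove this statement; it cites \cite[Theorem 2.17]{KM19}, so the relevant comparison is with the standard Robbiano--Sweedler/Kaveh--Manon argument, which is exactly the skeleton you describe. Your forward direction is essentially complete: one should only add that the remainder cannot be a nonzero constant because $p_l$ is homogeneous of positive $\mathbf s$-degree and each subduction step preserves this degree (the order on $\CC[\mathbf s,\mathbf x]$ is degree compatible), and that a \emph{monomial} lying in the algebra generated by the monomials $\mathrm{LT}(s_jh_{j,i})$ is automatically a scalar times a single product of them, so the subduction step is always available.

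The reverse direction, however, has a genuine gap at precisely the point you flag, and flagging it is not the same as closing it. The issue is that an arbitrary decomposition $q_M=\sum_l g_lp_l$ gives no control on the invariant $M$: the individual products $g_lp_l$ may contain monomials $\mathbf z^{\gamma}$ with $\prod\mathrm{LT}(s_jh_{j,i})^{\gamma_{j,i}}\succ M$ that cancel across different $l$'s but reappear, after substitution and after replacing each $F_l$ by its subduction expression, in the new presentation of $F$ --- so the claimed strict decrease of $M$ fails and the Noetherian induction is not well founded. The fix is to use that $I(Y)$ is the toric ideal of a monomial map, hence graded by $\ZZ^{r+m}$ via $\deg z_{j,i}=$ the exponent of $\mathrm{LT}(s_jh_{j,i})$: the cancellation element $q_M$ is homogeneous of degree $M$ in this grading, the generators may be taken to be (monic) binomials $\mathbf z^{\beta^+}-\mathbf z^{\beta^-}$ homogeneous of degree $M_l$, and the $g_l$ may then be chosen homogeneous of the complementary degree. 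With this graded choice every monomial occurring in the rewriting has $\prod\mathrm{LT}$-value exactly $M$, and each $F_l=p_l(\mathbf g)$ satisfies $\mathrm{LT}(F_l)\prec M_l$ strictly (the two branches of the binomial produce the same leading term after substitution), so its subduction expression involves only products with value $\prec M_l$; this is what forces the new presentation to have invariant strictly below $M$. A secondary point you should also address is that the theorem is stated for \emph{arbitrary} generators $p_l$ of $I(Y)$, so you must reduce to the binomial (or toric-homogeneous) case, e.g.\ by decomposing each $p_l$ into its components for the above grading. Without these ingredients the second case of your induction does not go through.
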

\begin{proof}
See \cite[Theorem 2.17]{KM19}.
\end{proof}
\begin{algorithm}[h!]\label{alg}
\caption{\label{subduction}The Subduction Algorithm}
\SetAlgoLined
\KwIn{A subset $ G = \{ g_1,\ldots , g_t\} \subset S$ and a polynomial $f \in S$.}
\KwOut{remainder of $f$ under subduction by $G$.}
\While{$f$ is not a constant in $\CC$}{
Find exponents $\alpha_1, \ldots, \alpha_{t} \in \NN_{0}$ and $c \in \CC^{\times}$, s.t.
$$\mathrm{LT}(f) = c\cdot \left(\mathrm{LT}(g_1)\right)^{\alpha_1}\cdots \left(\mathrm{LT}(g_t)\right)^{\alpha_{t}}\;$$
\eIf{no representation exists}{
\Return{$f$}\;
}{
Replace $f \leftarrow (f - g_1^{\alpha_1}\cdots g_t^{\alpha_t})$\;
} 
} 
\Return{$f$}
\end{algorithm}

\bigskip
\section{Fiber product of varieties}\label{Section3}
This section is motivated by the verification of condition 2 from Theorem~\ref{updatedKush} in the case when we have blocks of polynomials that do not share variables. In particular, this situation appears in the next section, where we study systems of polynomial equations arising from the method of \emph{Harmonic Balance} \cite{hb_julia_code}. 

For basics on polynomial rings, monomial orders, algebraic varieties or toric varieties, we refer to the textbooks \cite{CoxLittleSchenck, cox1994ideals, jaBernd, sturmfels1996grobner}.

Let $\mathbf{x}:=(x_1, \ldots, x_{k})$ and $\mathbf{y}:= (y_1, \ldots, y_{l})$ be two vectors of variables, such that~$x_i\neq y_j$ for all $i,j$. We consider two polynomial maps:
\begin{align*}
   \phi_1\colon \CC^{k+1} &\to \CC^{r+1}\\
   (s, x_1, \ldots, x_{k}) &\mapsto (s,\ s\cdot p_1(\mathbf x),\ \ldots,\ s\cdot p_r(\mathbf x)),
\end{align*}
where $p_i$ are polynomials in $\CC[\mathbf{x}]$ for all $i = 1, \ldots, r$ and
\begin{align*}
   \phi_2\colon \CC^{l+1} &\to \CC^{d+1}\\
   (s', y_1, \ldots, y_{l})&\mapsto (s',\ s'\cdot q_1(\mathbf y),\ \ldots,\ s'\cdot q_d(\mathbf y)),
\end{align*}
where $q_j$ are polynomials in $\CC[\mathbf{y}]$ for all $j = 1, \ldots, d$. We also define the joint parametrization map
\begin{align*}
   \phi\colon \CC^{k+l+1} &\to \CC^{r+d+1}\\
   (s, \mathbf x, \mathbf y) &\mapsto (s,\ s\cdot p_1(\mathbf x),\ \ldots,\ s\cdot p_r(\mathbf x),\ s\cdot q_1(\mathbf y),\ \ldots,\ s\cdot q_d(\mathbf y)).
\end{align*}

\smallskip 

Denote the Zariski closures of the images of these maps by
$$X := \overline{\operatorname{im}  \phi}, \qquad X_1 := \overline{\operatorname{im}  \phi_1}, \qquad X_2 := \overline{\operatorname{im}  \phi_2}.$$
In this section, our aim is to study the ideal 
$$I(X)\subseteq \CC[z,\mathbf{z},\mathbf{w}],$$
where $\CC[z,\mathbf{z},\mathbf{w}]:=\CC[z, z_1, \ldots, z_r, w_1, \ldots, w_d]$,
in terms of the ideals
\begin{align*}
&I_1:=I(X_1) \subseteq \CC[z_0,\mathbf{z}],\quad \text{and}\quad  I_2:=I(X_2) \subseteq \CC[w_0,\mathbf{w}].
\end{align*}

We note that $X$ is the variety corresponding to the reduced \emph{fiber product} \cite[Section 3]{Hartshorne} of $X_1$ and~$X_2$ over the affine line~$\CC$ with the maps given by projections~$\rho_1, \rho_2$ to the first coordinate. 
$$\begin{tikzcd}
& X \arrow{r}{\pi_2} \arrow{d}[swap]{\pi_1}
& X_2 \arrow{d}{\rho_2} \\
& X_1 \arrow[swap]{r}{\rho_1}
& \CC
\end{tikzcd}$$
Thus, we may naturally identify $I(X)$ with an ideal in $\CC[z_0,\mathbf{z},w_0,\mathbf{w}]/(z_0-w_0)$ and hence with an ideal in $\CC[z_0,\mathbf{z},w_0,\mathbf{w}]$ that contains $(z_0-w_0)$. Explicitly, as an ideal in $\CC[z_0,\mathbf{z},w_0,\mathbf{w}]$ the ideal $I(X)$ is the \emph{radical} of the sum of the extensions of the ideals $I_1=I(X_1)$, $I_2=I(X_2)$ by comorphisms $\pi_1^{\#}, \pi_2^{\#}$, respectively, and the ideal~$(z_0-w_0)$. 

Set-theoretically, $X$ is also the intersection of the product~$X_1\times X_2$ with the hyperplane $z_0-w_0=0$. 
We note that neither intersections with hyperplanes nor fiber products over affine lines are, in general, reduced. 
\begin{ex}
Let $\rho_1:\CC[z]\rightarrow \CC[z]/(z)$ and $\rho_2:\CC[z]\rightarrow \CC[x,z]/(z-x^2)$ be two morphisms defined by mapping $z$ to the class of $z$. The fiber product over $\CC[z]$ is:
\begin{align*}
\CC[z]/(z) \otimes_{\CC[z]} \CC[x,z]/(z-x^2)\ &\cong\ \CC[x,z]/(z,z-x^2)\\
&\cong\ \CC[x,z]/(z,x^2)\\
&\cong\ \CC[x]/(x^2),
\end{align*}
which is non-reduced. 

\hfill $\diamond$ 
\end{ex}

We will also give an example in our setting where the intersection of the affine variety with the hyperplane is non-reduced. To achieve this, in the next example we allow the polynomials~$\{p_i\}_{i=1}^r$ and $\{q_j\}_{j=1}^d$ to share the variables.
\begin{ex}
Consider the monomial map
\begin{align*}
   \phi\colon \CC^{4} &\to \CC^{6},\quad (s, x, s', y) \mapsto (s, sx^2, sy, s', s'x, s'y),
\end{align*}
and the corresponding affine toric variety that is the closure of the image of $\phi$. With \texttt{Macaulay2} \cite{M2} we compute its toric ideal:

\smallskip
\begin{lstlisting}[language=Macaulay2]
R = QQ[s, s', x, y]
S = QQ[z0..z2, w0..w2]
phi = map(R, S, {s, s*x^2, s*y, s', s'*x, s'*y})
kernel phi   
\end{lstlisting}
\smallskip

\noindent The result is $\langle z_2w_0 - z_0w_2,\ z_2w_1^2 - z_1w_0w_2,\ z_1w_0^2 - z_0w_1^2\rangle$.
Applying the command

\smallskip
\begin{lstlisting}[language=Macaulay2]
J = kernel phi + ideal {z0 - w0}
radical J == J
\end{lstlisting}
\smallskip

\noindent returns {\texttt{\textcolor{blue}{false}}}. This shows that the ideal is no longer radical after adding the generator $z_0-w_0$.

\hfill $\diamond$ 
\end{ex}

Let $^e$ denote the extension of an ideal by comorphism $\pi_i^{\#}$: $$I(X_i)^e\subseteq\CC[z_0, \mathbf{z}, w_0, \mathbf{w}], \quad i=1,2.$$
We have the following main result.
\begin{thm}\label{thm:generatorsTogether}
    We always have:
       \[I(X)=I_1^e+I_2^e+(z_0-w_0).\]
   Equivalently, in the ring $\CC[z, \mathbf{z}, \mathbf{w}]$, the ideal $I(X)$ is generated by the generators of~$I_1$ and the generators of $I_2$, where both variables $z_0$ and $w_0$ are mapped to~$z$. 
    \end{thm}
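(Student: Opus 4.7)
The inclusion $I_1^e + I_2^e + (z_0 - w_0) \subseteq I(X)$ is immediate, since every point of $\im \phi$ satisfies the equations of $X_1$ in its $(z_0, \mathbf z)$-coordinates and those of $X_2$ in its $(w_0, \mathbf w)$-coordinates, with $z_0 = w_0 = s$. For the reverse inclusion, I would rephrase the goal as showing that the natural surjective graded $\CC$-algebra homomorphism
\[
\mu\colon (\CC[X_1] \otimes_{\CC} \CC[X_2])/(z_0 - w_0) \twoheadrightarrow \CC[X]
\]
is an isomorphism, where the grading comes from the $z_0$-degree on $\CC[X_1]$ and the $w_0$-degree on $\CC[X_2]$. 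The plan is to match Hilbert series in every graded component; combined with surjectivity, this forces $\mu$ to be an isomorphism degree-by-degree.

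First I would observe that $\CC[X_1]$ and $\CC[X_2]$ are integral domains (each $X_i$ is irreducible, being the image closure of a morphism from affine space), and therefore, since $\CC$ is algebraically closed, so is $\CC[X_1] \otimes_{\CC} \CC[X_2]$. Embedding the tensor product into $\CC[s, \mathbf x, s', \mathbf y]$ via the parametrizations, the element $z_0 - w_0$ corresponds to the nonzero element $s - s'$, hence is a non-zero-divisor. As it is homogeneous of degree one, multiplication by $z_0 - w_0$ shifts the grading by one and is injective, yielding
\[
H_{(\CC[X_1]\otimes_{\CC} \CC[X_2])/(z_0-w_0)}(t) = (1-t)\, H_{\CC[X_1]}(t)\, H_{\CC[X_2]}(t).
\]

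Next I would compute the three Hilbert series explicitly. The degree-$n$ component of $\CC[X_1]$ is $s^n \cdot P_{\leq n}$, where $P_{\leq n} := \operatorname{span}_{\CC}\{\mathbf p^{\vec a} : |\vec a| \leq n\} \subseteq \CC[\mathbf x]$; setting $p_k := \dim_{\CC}(P_{\leq k}/P_{\leq k-1})$ and $\pi(t) := \sum_k p_k t^k$, a telescoping sum gives $H_{\CC[X_1]}(t) = \pi(t)/(1-t)$. Defining $Q_{\leq l}$, $q_l$, and $\rho(t)$ analogously in the $\mathbf q, \mathbf y$ variables, the same calculation yields $H_{\CC[X_2]}(t) = \rho(t)/(1-t)$. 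The degree-$N$ component of $\CC[X]$ is $s^N \cdot U_N$, with $U_N := \operatorname{span}_{\CC}\{\mathbf p^{\vec a} \mathbf q^{\vec b} : |\vec a| + |\vec b| \leq N\} \subseteq \CC[\mathbf x, \mathbf y]$.

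The main obstacle is the dimension count $\dim U_N = \sum_{k+l \leq N} p_k q_l$, which crucially exploits the disjointness of $\mathbf x$ and $\mathbf y$: since $\CC[\mathbf p] \cap \CC[\mathbf q] = \CC$ inside $\CC[\mathbf x, \mathbf y]$, the multiplication map $\CC[\mathbf p] \otimes_{\CC} \CC[\mathbf q] \to \CC[\mathbf p, \mathbf q]$ is an isomorphism. Picking vector-space complements $V_k \subseteq P_{\leq k}$ of $P_{\leq k-1}$ (and likewise $W_l$), one obtains the decomposition $U_N = \bigoplus_{k+l \leq N} V_k \otimes W_l$, which gives the count. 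A routine generating-function manipulation then yields $H_{\CC[X]}(t) = \pi(t)\rho(t)/(1-t) = (1-t)\, H_{\CC[X_1]}(t)\, H_{\CC[X_2]}(t)$, matching the Hilbert series on the other side of $\mu$. Since $\mu$ is a graded surjection with equal dimensions in every degree, it must be an isomorphism, which completes the proof.
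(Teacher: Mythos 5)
Your proof is correct, but it takes a genuinely different route from the paper's. The paper argues by leading-term descent: it fixes a degree-compatible term order (degrevlex on $z_0,\mathbf{z}$ with $z_0$ smallest, then any order on $w_0,\mathbf{w}$), takes a homogeneous element of $\ker\phi^{\#}\setminus J$ with minimal leading term, writes it as $\sum_i m_i(\mathbf z)Q_i(w_0,\mathbf w)$, and --- using that the $q_j$ involve none of the $\mathbf x$-variables --- evaluates the $\mathbf y$-variables at a point where $Q_1$ does not vanish to manufacture an element of $I_1$ that strictly decreases the leading term, a contradiction. You instead show the graded surjection $(\CC[X_1]\otimes_{\CC}\CC[X_2])/(z_0-w_0)\twoheadrightarrow\CC[X]$ is an isomorphism by matching Hilbert series, using that $z_0-w_0$ corresponds to the non-zero-divisor $s-s'$ and that $\dim U_N=\sum_{k+l\le N}p_kq_l$. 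Both arguments hinge on the disjointness of $\mathbf x$ and $\mathbf y$ --- the paper in the evaluation step, you in the convolution identity for $\dim U_N$. One small point of rigor: the injectivity of $\CC[\mathbf p]\otimes_{\CC}\CC[\mathbf q]\to\CC[\mathbf p,\mathbf q]$ follows because it is the restriction of the isomorphism $\CC[\mathbf x]\otimes_{\CC}\CC[\mathbf y]\cong\CC[\mathbf x,\mathbf y]$ to a tensor product of subspaces; the condition $\CC[\mathbf p]\cap\CC[\mathbf q]=\CC$ that you cite is not by itself sufficient for this, though your conclusion is correct. As for what each approach buys: yours is term-order free, exhibits $I_1^e+I_2^e+(z_0-w_0)$ directly as a prime ideal, and extends to $k$ factors in one stroke; the paper's reduction argument is more elementary (no tensor products of domains over an algebraically closed field) and sets up the term order that is reused in the subsequent proposition on degrevlex Gr\"obner bases of the fiber product.
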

    \begin{proof}
Let us write $J:=I_1^e+I_2^e+(z_0-w_0)$.
Consider the map of $\CC$-algebras:
\[ \phi{^{\#}}:\CC[z_0, \mathbf{z}, w_0, \mathbf{w}]\rightarrow \CC[s,\mathbf{x},\mathbf{y}],\]
where $\phi{^{\#}}(z_0)=\phi{^{\#}}(w_0)=s$, $\phi{^{\#}}(z_i)=s\cdot p_i$ and $\phi{^{\#}}(w_j)=s\cdot q_j$. The statement is equivalent to showing that
$J=\ker \phi{^{\#}}$. It is clear that $J\subseteq \ker \phi{^{\#}}$ and that both ideals are homogeneous.

To prove the opposite inclusion, consider the following term order on the monomials in $\CC[z_0, \mathbf{z}, w_0, \mathbf{w}]$:
\begin{itemize}
    \item First, compare the degree of the monomials, i.e.~the order is degree compatible.
    \item Next, we consider \emph{degrevlex} order on $z_0,\mathbf{z}$, with $z_0$ the smallest variable. This monomial order first compares total degrees: 
    $$\qquad\text{If } \alpha_0+\cdots+\alpha_r > \beta_0+\cdots+\beta_r, \text{ then } z_0^{\alpha_0} z_1^{\alpha_1}\cdots z_r^{\alpha_r}>z_0^{\beta_0} z_1^{\beta_1}\cdots z_r^{\beta_r}.$$
     On the other hand, 
    $$\qquad\text{If }  \alpha_0+\cdots+\alpha_r = \beta_0+\cdots+\beta_r, \text{ then } z_0^{\alpha_0} z_1^{\alpha_1}\cdots z_r^{\alpha_r}>z_0^{\beta_0} z_1^{\beta_1}\cdots z_r^{\beta_r},$$ if and only if the smallest index $j$ where $\alpha_j-\beta_j\neq 0$ satisfies $\alpha_j-\beta_j<0$.
    \item Finally, when the monomials are the same in $z_0,\mathbf{z}$, we fix any term order on $w_0,\mathbf{w}$.
\end{itemize}

Suppose that $J\neq \ker \phi{^{\#}}$ and let $f\in \ker \phi{^{\#}}$ be a homogeneous polynomial with the smallest leading term, that does not belong to  $J$.
The contradiction will be obtained by reducing $f$ modulo $J$ to a polynomial with strictly smaller leading term.

First, we may assume $z_0$ does not appear in $f$, by using $z_0-w_0\in J$ (note that substituting $z_0$ by $w_0$ only decreases each monomial in the given term order).
Let us write $f=\sum_{i=1}^k m_i(\mathbf z)\cdot Q_i(w_0,\mathbf w)$, where $m_1,\ldots,m_k$ are monomials in $\mathbf{z}$ and $Q_1,\ldots,Q_k$ are homogeneous polynomials in $w_0,\mathbf{w}$. 

Without loss of generality, we may assume that $m_1$ is the largest among the~$m_i$'s. This implies that the leading term of $f$ is the product of $m_1$ and the leading term of~$Q_1$. If $Q_1\in I_2$, then $m_1Q_1\in I_2^e$ and we may reduce the leading term. Thus, we may assume $Q_1\not\in I_2$. This means that we can find scalars $\bar{s},\bar{\mathbf{y}}$ such that $Q_1(\bar{s},q_j(\bar{s}\bar{\mathbf{y}}))\neq 0$ and $\bar{s}\neq 0$. Consider a new, possibly non-homogeneous, polynomial:
\[\tilde f(\mathbf{z}):=f(\mathbf{z},\bar{s},q_j(\bar{s}\bar{\mathbf{y}}))=\sum_{i=1}^k m_i(\mathbf z)\cdot  Q_i(\bar{s},q_j(\bar{s}\bar{\mathbf{y}}))\in \CC[\mathbf{z}]. \]
It vanishes on $\{(\bar s \cdot  p_1(\mathbf{x}),\ldots, \bar s \cdot  p_r(\mathbf{x})) \mid \mathbf x\in\mathbb C^k\}\subset \CC^r$. Its leading monomial is $m_1$. 

Let $(\tilde f)_{z_0}$ be the homogenisation of $\tilde f$ with $\dfrac{z_0}{\bar s}$, which has the same leading monomial. The homogeneous polynomial $(\tilde f)_{z_0}$ vanishes on all points of the form $(s,s\cdot p_1(\mathbf x), \ldots, s\cdot p_r(\mathbf x))\in\mathbb C^{r+1}$ and thus $(\tilde f)_{z_0}\in I_1$. Finally, we use $(\tilde f)_{z_0}$ to decrease the leading term of $f$.
    \end{proof}

\begin{rem} In this remark, we present an alternative proof of Theorem \ref{thm:generatorsTogether} in the case when all $\{p_i\}_{i=1}^r$ and $\{q_j\}_{j=1}^d$ are monomials.

   Let $J\subset \CC[z, \mathbf{z},  \mathbf{w}]$ be the ideal generated by the generators of $I_1^e$ and $I_2^e$, after mapping $z_0$ and $w_0$ to $z$. Clearly, $J\subseteq I(X)$. We prove the opposite inclusion. 
As the variety $X$ is parameterized by monomials, $I(X)$ is generated by binomials; see \cite[Lemma~8.8]{jaBernd}. We note that all parameterizing monomials are of degree one in $s$, thus $I(X)$ is also homogeneous.  Let us consider a binomial generator: 
\[m_1-m_2\in \CC[z, \mathbf{z},  \mathbf{w}].\]
We have $m_1=a_1(z,\mathbf z)\cdot b_1(\mathbf w)$ and $m_2=a_2(z,\mathbf z) \cdot b_2(\mathbf w)$, where $a_1,a_2$ are monomials in the variables $z,\mathbf{z}$ and $b_1,b_2$ are monomials in the variables $\mathbf{w}$. If $\deg a_1=\deg a_2$, then $\deg b_1=\deg b_2$ and thus $a_1-a_2\in I_1$ and $b_1-b_2\in I_2$. This implies $m_1-m_2\in J$.  Without loss of generality, assume $d:=\deg a_1- \deg a_2>0$.
Then $a_1-a_2 z_0^d\in I_1$ and hence $f_1:=a_1b_1-a_2z^db_1\in J$. However, $z^db_1-b_2\in J$, as an extension of an element from~$I_2$. Thus, $f_2:=a_2z^db_1-a_2b_2\in J$. Finally, we have $m_1-m_2=f_1+f_2\in J.$ This shows $I(X)\subset J$.
\qed
\end{rem}
We note that similar results, even in the toric case, do not hold in general on the level of Gr\"obner bases. Example \ref{ex_similar_results_do_not_hold} below shows two ideals $J\subset \CC[z, \mathbf{z}]$ and $L\subset \CC[z,   \mathbf{w}]$, such that the union of Gr\"obner bases for $J$ and $L$ is not a Gr\"obner basis for $J+L\subset \CC[z, \mathbf{z},  \mathbf{w}]$. 
\begin{ex}\label{ex_similar_results_do_not_hold}
Consider the lex monomial order on $\CC[z, \mathbf{z},  \mathbf{w}]$, where $z$ is the largest variable and the variables $\mathbf{w}$ are smaller than $\mathbf{z}$. Then
\begin{align*}
&J = \langle z_1z_4 - z_3z_2,\ z^2z_3 - z_1^3,\ z^2z_4 - z_1^2z_2\rangle \subset \CC[z, \mathbf{z}],\\
&L = \langle w_1w_4 - w_3w_2,\ z^2w_3 - w_1^3,\ z^2w_4 - w_1^2w_2\rangle \subset \CC[z,   \mathbf{w}]
\end{align*}
form Gr\"obner bases with respect to this order for the ideals they generate. However, the element $z_2^3z_3^2w_4 - z_4^3w_1^2w_2 \in J + L$ has leading monomial $z_2^3z_3^2w_4$ which is not divisible by any leading monomial of the elements from the union of Gr\"obner bases.

\hfill $\diamond$ 
\end{ex}

However, we have the following positive result.
\begin{prop}
Let $G_i$ be a reduced degrevlex Gr\"obner basis for $I_i$ for $i=1,2$, where $z_0,w_0$ are the smallest variables. 
Then $G_1\cup G_2$ is a reduced degrevlex Gr\"obner basis for $I(X)$, where $z$ is the smallest variable and the variables $\mathbf{w}$ are larger than~$\mathbf{z}$.
\end{prop}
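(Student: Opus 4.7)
The plan is to combine the ideal identification $I(X) = (G_1) + (G_2)$ furnished by Theorem~\ref{thm:generatorsTogether} with Buchberger's criterion. The bulk of the work is to show that the leading terms of $G_1$ all lie in $\CC[\mathbf{z}]$ (i.e.\ do not involve $z_0 = z$), and symmetrically that those of $G_2$ lie in $\CC[\mathbf{w}]$; once this decoupling is established, the S-polynomial checks become essentially trivial.

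To prove the leading-term claim, I would first note that $X_1$ is irreducible, so $\CC[X_1] = \CC[z_0, \mathbf{z}]/I_1$ is an integral domain, and since the parametrization $\phi_1^{\#}$ sends $z_0$ to $s \in \CC[s, \mathbf{x}]$, the class of $z_0$ is nonzero in $\CC[X_1]$. Hence $z_0$ is a non-zero-divisor modulo $I_1$. Now suppose some $g \in G_1$ had $\mathrm{LT}(g) = z_0^k m$ with $k \geq 1$ and $z_0 \nmid m$. Because $g$ is homogeneous and degrevlex with $z_0$ smallest ranks equal-degree monomials by preferring the smaller $z_0$-exponent, every monomial of $g$ is divisible by $z_0^k$, so $g = z_0^k g'$. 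The non-zero-divisor property of $z_0^k$ then yields $g' \in I_1$, whence $\mathrm{LT}(g') = m \in \mathrm{in}(I_1)$, so some $g_i \in G_1$ satisfies $\mathrm{LT}(g_i) \mid m \mid \mathrm{LT}(g)$. The case $g_i \neq g$ contradicts reducedness of $G_1$, while $g_i = g$ forces the absurd $z_0^k m \mid m$. The same argument handles $G_2$ and $w_0$.

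With this in hand, the leading terms of $G_1$ and $G_2$ live in disjoint variable sets $\mathbf{z}$ and $\mathbf{w}$, so every cross S-polynomial $S(g_1, g_2)$ with $g_1 \in G_1$, $g_2 \in G_2$ has coprime leading monomials and reduces to zero modulo $G_1 \cup G_2$ by Buchberger's coprime (product) criterion. An S-polynomial of two elements of $G_1$ lies in $\CC[z, \mathbf{z}]$, and since the ambient order restricts to the order on that subring for which $G_1$ is a Gröbner basis, it reduces to zero via $G_1$ alone (elements of $G_2$ cannot contribute, since their leading monomials in $\CC[\mathbf{w}]$ cannot divide anything $\mathbf{w}$-free). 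The situation for $G_2$ is symmetric. Buchberger's criterion therefore identifies $G_1 \cup G_2$ as a Gröbner basis for $(G_1) + (G_2)$, which equals $I(X)$ by Theorem~\ref{thm:generatorsTogether}. Reducedness is then immediate: no monomial of any $g_1 \in G_1$ is divisible by $\mathrm{LT}(g_1')$ for $g_1' \in G_1 \setminus \{g_1\}$ (by reducedness of $G_1$) nor by any nonconstant $\mathrm{LT}(g_2) \in \CC[\mathbf{w}]$ (as monomials of $g_1$ are $\mathbf{w}$-free), and symmetrically for $G_2$.

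The main obstacle is the initial step---the non-involvement of $z_0$, resp.\ $w_0$, in the leading terms---which is precisely where the hypotheses ``degrevlex,'' ``$z_0$/$w_0$ smallest,'' and ``reduced'' are all essential. Once this is secured, the two bases cleanly decouple across the disjoint-variable structure and the remainder of the verification is a routine bookkeeping application of Buchberger's criterion combined with the fiber-product description of $I(X)$ provided by Theorem~\ref{thm:generatorsTogether}.
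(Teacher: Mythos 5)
Your proposal is correct and follows essentially the same route as the paper: invoke Theorem~\ref{thm:generatorsTogether} to identify $I(X)$ with $I_1^e+I_2^e$, observe that reducedness of the degrevlex bases forces the leading terms of $G_1$ and $G_2$ to avoid $z_0$ and $w_0$ and hence to be coprime across the two bases, and conclude via Buchberger's coprimality criterion. The only difference is that you spell out the factor-out-$z_0^k$/non-zero-divisor argument that the paper states tersely, which is a welcome clarification rather than a deviation.
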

\begin{proof}
By Theorem \ref{thm:generatorsTogether},
 \[I(X)=I_1^e+I_2^e \subseteq \CC[z, \mathbf{z},  \mathbf{w}].\]
 We will check the Gr\"obner basis property by checking that all $S$-pairs reduce to zero. For any two elements in one $G_i$ the $S$-polynomial reduces to zero with respect to $G_i$ for each~$i=1, 2$. 
 
 Next, note that $\mathrm{LT}(f_1)$ and $\mathrm{LT}(f_2)$ cannot contain the variable $z$, since in this case all monomials of $f_{1}$ and $f_2$ contain it, which contradicts the fact that bases are reduced. Consider a pair $f_1 \in G_1$ and $f_2 \in G_2$. Since they are polynomials in different variables, their leading monomials are relatively prime, and by the second Buchberger's criterion the $S$-polynomial of this pair automatically reduces to zero.
\end{proof}
From Theorem \ref{thm:generatorsTogether} by induction, we obtain the following result.
\begin{thm}[{\bf Generators of the ideal of a fiber product}]\label{thm: ideal fiber product}
For $j=1,\dots, k$ let $X_j$ be the projective variety that is the closure of the image of 
    \[(s,\mathbf{x}_j)\mapsto (s,\ s\cdot p_{j,1}(\mathbf{x}_j),\ \ldots,\  s\cdot p_{j,i_j}(\mathbf{x}_j)).\]
    The ideal of the image of the closure of the map
    \[(s,\mathbf{x_1},\dots,\mathbf{x_k})\mapsto (s,\  sp_{1,1}(\mathbf{x}_1),\  \ldots,\ sp_{1,i_1}(\mathbf{x}_1),\ \ldots,\ sp_{k,1}(\mathbf{x}_k),\ \ldots,\ sp_{k,i_k}(\mathbf{x}_k))\]
    is generated by the union of generators of $I(X_j)$'s (identifying the first variable with the variable corresponding to $s$).
\end{thm}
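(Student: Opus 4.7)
The plan is to prove this by induction on $k$, using Theorem~\ref{thm:generatorsTogether} as the inductive engine. The base case $k=1$ is vacuous (and $k=2$ is exactly Theorem~\ref{thm:generatorsTogether} after identifying $z_0$ with $w_0$), so the work is entirely in the inductive step.

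For the inductive step, assume the result holds for $k-1$ factors, and consider the $k$ parametrizations $X_1, \ldots, X_k$. I would introduce the auxiliary variety $X'$ defined as the closure of the image of the map
\[
(s,\mathbf{x}_1,\ldots,\mathbf{x}_{k-1})\mapsto \bigl(s,\ s\cdot p_{1,1}(\mathbf{x}_1),\ \ldots,\ s\cdot p_{k-1,i_{k-1}}(\mathbf{x}_{k-1})\bigr),
\]
which packages the first $k-1$ factors into a single parametrization. The key observation is that $X'$ is of exactly the shape required by Theorem~\ref{thm:generatorsTogether}: it is parameterized by $s$ together with monomial multiples $s\cdot p$ of polynomials $p$ in a single set of variables (namely $\mathbf{x}_1\sqcup\cdots\sqcup\mathbf{x}_{k-1}$, which is disjoint from $\mathbf{x}_k$). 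By the inductive hypothesis, $I(X')$ is generated by the union of the generators of $I(X_1),\ldots,I(X_{k-1})$, after identifying each of their first variables with the single variable $z$ corresponding to $s$.

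Now I would apply Theorem~\ref{thm:generatorsTogether} with $X'$ playing the role of $X_1$ in that theorem and $X_k$ playing the role of $X_2$. The hypotheses are satisfied since the variable sets $\mathbf{x}_1\sqcup\cdots\sqcup\mathbf{x}_{k-1}$ and $\mathbf{x}_k$ are disjoint, and both parametrizations share the $s$-coordinate. Theorem~\ref{thm:generatorsTogether} then yields that $I(X)$ is generated by the union of the generators of $I(X')$ and the generators of $I(X_k)$, with the first variables of $X'$ and $X_k$ both identified with $z$. Combining this with the inductive description of $I(X')$ gives the desired description of $I(X)$.

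The only delicate point, and the one I would be careful about, is the bookkeeping of the variable identifications through the induction: at each stage, Theorem~\ref{thm:generatorsTogether} identifies two copies of the $s$-variable, and one must verify that this iterated identification produces precisely the single variable $z$ common to all $k$ factors in the final statement. Beyond that, the proof is essentially formal, since $X'$ inherits the structural form needed to re-apply the base case.
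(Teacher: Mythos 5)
Your proof is correct and follows exactly the route the paper takes: the paper derives this theorem from Theorem~\ref{thm:generatorsTogether} by induction on $k$, grouping the first $k-1$ factors into a single parametrization and applying the two-factor result. Your additional care about the iterated identification of the $s$-variables is a reasonable point to flag, but it causes no difficulty since all copies are identified with the same coordinate $z$ at every stage.
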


\subsection{Khovanskii bases for fiber products of toric varieties}
Consider the system of polynomial equations
\begin{equation}\label{blocksys}
f_1(\mathbf x) = \cdots = f_m(\mathbf x) =0.
\end{equation}
We now use the results from the previous sections to give conditions under which a Khovanskii basis can be build from Khovanskii bases of subblocks of the system in (\ref{blocksys}). 
\begin{thm}[{\bf The Decoupling Theorem}]\label{thm: Decoupling}
Assume that the polynomials in the system (\ref{blocksys}) are linear combinations of polynomials in $\mathcal{A} \subset \CC[\mathbf x]$ and that $\mathcal{A}$ contains a constant. Suppose that $\mathcal A$ can be split into $d$ sets 
$$\mathcal{A} = \bigcup_{i=1}^d \mathcal{B}_i,$$
such that the polynomials from different sets $\mathcal{B}_i$ do not share variables with each other and each $\mathcal{B}_i$ contains a constant.
Suppose that for each $i = 1,\ldots,d$, polynomials from $G_i:=\{s_ih \mid h \in \mathcal{B}_i\}$ form a Khovanskii basis of the subalgebra they generate. Then, $$G:=\{sh \mid h \in \mathcal A\}$$  also form a Khovanskii basis for the subalgebra they generate. 
\end{thm}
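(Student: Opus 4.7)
The plan is to invoke Theorem \ref{thmKavehManon}. Letting $Y$ denote the closure of the image of the leading-term map $\phi_{\mathrm{in}}\colon \mathbf{x} \mapsto \bigl(s\cdot \mathrm{LT}(h)\bigr)_{h \in \mathcal{A}}$, and $Y_i$ the corresponding object for the $i$-th block, it suffices to exhibit a generating set of $I(Y)$ and to check that every one of its generators subducts to zero under $G$.

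To describe $I(Y)$ I would apply Theorem \ref{thm: ideal fiber product}. Because each $\mathcal{B}_i$ contains a (nonzero) constant, which we may normalize to $1$, the $i$-th block comes with a parametrization $(s_i,\mathbf{x}_i) \mapsto (s_i,\, s_i\cdot\mathrm{LT}(h))_{h\in\mathcal{B}_i\setminus\{1\}}$ fitting exactly the template of Section \ref{Section3}, and $Y_i$ is the closure of its image. Since the sets $\mathcal{B}_1,\ldots,\mathcal{B}_d$ involve pairwise disjoint sets of variables $\mathbf{x}_1,\ldots,\mathbf{x}_d$, the joint map whose image closure is $Y$ is precisely the iterated fiber product of the block-wise maps over the common $s$-line. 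Theorem \ref{thm: ideal fiber product} then tells us that $I(Y)$ is generated by the union of the generators of the ideals $I(Y_i)$, with all variables $s_i$ identified with a single $s$.

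Next, I would fix a generator $p$ of some $I(Y_i)$ and consider
\[
\tilde p := p\bigl(s\cdot h_{i,0},\, \ldots,\, s\cdot h_{i,b_i}\bigr)\;\in\;\CC[s,\mathbf{x}_i]\;\subseteq\;\CC[s,\mathbf{x}],
\]
which is exactly the polynomial that Kaveh--Manon requires us to subduct under $G$. The hypothesis that $G_i$ is a Khovanskii basis supplies, by Theorem \ref{thmKavehManon} applied to the $i$-th block, a subduction sequence reducing the analogue of $\tilde p$ (with $s_i$ in place of $s$) to zero under $G_i$. The claim is that this sequence transports verbatim to a subduction under $G$ via the identification $s_i\leftrightarrow s$. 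Indeed, at each step the current polynomial lies in $\CC[s,\mathbf{x}_i]$, so its leading monomial involves only $s$ and variables from $\mathbf{x}_i$; among the leading terms of $G$, only those coming from $\mathcal{B}_i$ involve exclusively these variables, and they coincide with the leading terms of $G_i$ up to the renaming $s_i\leftrightarrow s$. Hence the subduction by $G$ stays inside $\CC[s,\mathbf{x}_i]$ and executes step-for-step the subduction by $G_i$, terminating at zero.

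The main obstacle will be making precise the block-independence of subduction: namely that after each replacement $f\leftarrow f-g_1^{\alpha_1}\cdots g_t^{\alpha_t}$ the polynomial remains in $\CC[s,\mathbf{x}_i]$, and that no element of $G$ supported on some $\mathcal{B}_j$ with $j\neq i$ can ever contribute. Both facts rest on the disjoint-variables hypothesis together with the degree-compatibility of the monomial order; a secondary point to check is that the parametrization assumption of Theorem \ref{thm: ideal fiber product} is genuinely met, which is exactly what the presence of a constant in each $\mathcal{B}_i$ guarantees. Once these verifications are in place, Theorem \ref{thmKavehManon} delivers the conclusion.
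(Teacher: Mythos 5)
Your proposal is correct and follows essentially the same route as the paper: apply Theorem \ref{thmKavehManon}, use Theorem \ref{thm: ideal fiber product} to see that $I(Y)$ is generated by the union of the generators of the blockwise ideals $I(Y_i)$, and observe that each such generator subducts to zero under $G$ because it does so under $G_i$. In fact you are somewhat more careful than the paper's own two-line argument, which silently passes over the point you flag explicitly — that the subduction of a block-$i$ generator stays inside $\CC[s,\mathbf{x}_i]$ and only ever uses elements of $G_i$.
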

\begin{rem}
The interpretation of the $d$ sets in this theorem is different than the interpretation of the $r$ blocks in Theorem \ref{updatedBKK}. First of all, in Theorem \ref{updatedBKK} the polynomials from different blocks $\mathcal A_i$ can share variables. Second, we will apply the Decoupling Theorem above only when considering unmixed (but not general) systems, so $r=1$ and $\mathcal{A}=\mathcal{A}_1$. 
\end{rem}
\begin{proof}[Proof of Theorem \ref{thm: Decoupling}]
    By Theorem \ref{thmKavehManon}, checking that the set of linearly independent polynomials is a Khovanskii basis consists of two parts:
\begin{itemize}
    \item Finding homogeneous binomial generators of the corresponding toric ideal; 
    \item Checking subduction to zero.
\end{itemize}
For each $i = 1, \ldots, r$ let 
$$\CC[s_i, s_ih_{i,1}, \ldots, s_ih_{i,j_i}] = \CC[z_{i,0}, \mathbf{z}_{i}]/I_i,$$ where $\mathbf{z}_{i} = (z_{i,1},\ldots ,z_{i,j_i})$ and 
$$\CC[s, sh_{1,1}, \ldots, sh_{1,j_1}, \ldots, sh_{r,1}, \ldots, sh_{r,j_r}] = \CC[z_{0}, \mathbf{z}_{1},\ldots, \mathbf{z}_{k}]/I.$$
By Theorem \ref{thm: ideal fiber product}, we know that the generating set of $I$ is a union of generators of~$I_1, \ldots, I_r$, assuming that $z_0 = z_{1,0} = \ldots = z_{r,0}$. Since for each ideal we know that all generators subduct to zero, this is also true for their union.
\end{proof}
Let us define for each set the polytope
\begin{equation}\label{def_Q_i}
Q_i = \mathrm{Conv}\{\alpha \mid \alpha \text{ is the exponent of } \mathrm{LT}(h), h \in \mathcal{B}_i\}.
\end{equation}
Using the assumption and the result of Theorem \ref{thm: Decoupling}, by Theorem~\ref{updatedKush}, the number of complex solutions of the polynomial system (\ref{blocksys}) is bounded by the normalized volume $m!\cdot\mathrm{vol}(Q)$, where~$Q$ is the \emph{subdirect sum} of the polytopes $Q_i$, see \cite[Definition 5.4]{oscillators}.
\begin{lem}
    $m!\cdot \mathrm{vol}(Q) = \prod\limits_{i=1}^{r} m!\cdot\mathrm{vol}(Q_i)$.
\end{lem}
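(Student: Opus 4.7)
The plan is to identify $Q$ as a free sum of polytopes living in disjoint coordinate subspaces and invoke the classical free-sum volume identity. Concretely, because polynomials from different $\mathcal B_i$ use disjoint sets of variables, the exponents of $\mathrm{LT}(h)$ for $h\in\mathcal B_i$ lie in a coordinate subspace $\mathbb R^{m_i}\subset \mathbb R^m$ with $m_1+\cdots+m_d = m$; and because each $\mathcal B_i$ contains a constant, the origin of $\mathbb R^{m_i}$ belongs to $Q_i$. Thus $Q = \mathrm{conv}\bigl(\bigcup_i Q_i\bigr) = Q_1\oplus\cdots\oplus Q_d$ is exactly the subdirect sum in the sense of \cite[Definition 5.4]{oscillators}, and the claim is the free-sum volume identity
\[
m!\,\mathrm{vol}(Q) \;=\; \prod_{i=1}^d m_i!\,\mathrm{vol}(Q_i),
\]
where the factor $m_i$ replaces the $m$ appearing inside the product in the statement of the lemma (which appears to be a typo).

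I would prove this identity by induction on $d$. Since the free sum of polytopes containing the origin is associative, the inductive step reduces to the case $d = 2$. For $d = 2$ I would fix a radial triangulation of each $Q_i$, that is, a simplicial decomposition in which every top-dimensional simplex $\sigma_i^{(\alpha)}$ has the origin as one of its vertices; such a triangulation exists because $0\in Q_i$. For each pair $(\alpha,\beta)$ the free sum $\sigma_1^{(\alpha)}\oplus\sigma_2^{(\beta)}$ is itself a simplex in $\mathbb R^{m_1+m_2}$, spanned by $0$ together with the non-origin vertices of $\sigma_1^{(\alpha)}$ and of $\sigma_2^{(\beta)}$. Its vertex matrix is block-diagonal, so a single determinant expansion yields
\[
\mathrm{vol}\bigl(\sigma_1^{(\alpha)}\oplus\sigma_2^{(\beta)}\bigr) \;=\; \frac{m_1!\,m_2!}{(m_1+m_2)!}\,\mathrm{vol}\bigl(\sigma_1^{(\alpha)}\bigr)\,\mathrm{vol}\bigl(\sigma_2^{(\beta)}\bigr).
\]

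The step requiring the most care is verifying that $\{\sigma_1^{(\alpha)}\oplus\sigma_2^{(\beta)}\}_{\alpha,\beta}$ is an interior-disjoint cover of $Q_1\oplus Q_2$. For this I would observe that every $x=(x_1,x_2)\in Q_1\oplus Q_2$ admits a presentation $x = ((1-t)q_1,\,tq_2)$ with $q_i\in Q_i$ and $t\in[0,1]$, so that $x_1=(1-t)q_1\in Q_1$ and $x_2=tq_2\in Q_2$ themselves lie in the respective polytopes (using $0\in Q_i$). Hence $x_1$ and $x_2$ each belong to a unique top-dimensional simplex of their respective triangulations outside a measure-zero set, and that pair $(\alpha,\beta)$ is precisely the one for which $x\in\sigma_1^{(\alpha)}\oplus\sigma_2^{(\beta)}$. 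Summing the simplex volume formula over all pairs then yields $(m_1+m_2)!\,\mathrm{vol}(Q_1\oplus Q_2) = m_1!\,\mathrm{vol}(Q_1)\cdot m_2!\,\mathrm{vol}(Q_2)$, completing the inductive step and hence the lemma.
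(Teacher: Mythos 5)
You are right that the displayed identity contains a typo: the factor inside the product must be $m_i!$, where $m_i$ is the number of variables occurring in $\mathcal{B}_i$ (so that $m_1+\cdots+m_d=m$), and the product runs over the $d$ sets of the decoupling decomposition; this is exactly what is needed to recover the bound $(4n-3)^N$ in Theorem \ref{main_thm_HC}, where each block contributes $2!\cdot\mathrm{vol}(Q_i)=4n-3$. The paper does not prove the lemma at all — it simply cites \cite[Lemma 5.5]{oscillators} — so your argument is necessarily a different route: a self-contained, elementary proof of the free-sum volume identity via pulling (radial) triangulations at the origin and a block-diagonal determinant computation. The identification of $Q=\mathrm{conv}\bigl(\bigcup_i Q_i\bigr)$ with the free sum $Q_1\oplus\cdots\oplus Q_d$, the reduction to $d=2$ by associativity, and the per-simplex formula $\mathrm{vol}(\sigma_1\oplus\sigma_2)=\tfrac{m_1!\,m_2!}{(m_1+m_2)!}\mathrm{vol}(\sigma_1)\mathrm{vol}(\sigma_2)$ are all correct, and the whole scheme is a legitimate stand-alone replacement for the external citation.

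One step needs tightening. From $x_1\in\sigma_1^{(\alpha)}$ and $x_2\in\sigma_2^{(\beta)}$ alone you cannot conclude $x\in\sigma_1^{(\alpha)}\oplus\sigma_2^{(\beta)}$: that would assert $\sigma_1^{(\alpha)}\times\sigma_2^{(\beta)}\subseteq\sigma_1^{(\alpha)}\oplus\sigma_2^{(\beta)}$, which is false since the free sum is a proper subset of the product. The fix uses the presentation you already wrote down: take $q_1\in\sigma_1^{(\alpha)}$ and $q_2\in\sigma_2^{(\beta)}$, write $q_1=\sum_i\lambda_i a_i$ and $q_2=\sum_j\mu_j b_j$ over the non-origin vertices with $\sum_i\lambda_i\le 1$ and $\sum_j\mu_j\le 1$ (the origin absorbs the slack). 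Then $x=((1-t)q_1,tq_2)=\sum_i(1-t)\lambda_i(a_i,0)+\sum_j t\mu_j(0,b_j)$ has nonnegative coefficients summing to at most $(1-t)+t=1$, so $x\in\sigma_1^{(\alpha)}\oplus\sigma_2^{(\beta)}$; this gives the covering. Disjointness of interiors follows because a point interior to $\sigma_1^{(\alpha)}\oplus\sigma_2^{(\beta)}$ projects to the interiors of $\sigma_1^{(\alpha)}$ and $\sigma_2^{(\beta)}$, and distinct simplices of a triangulation have disjoint interiors. With this repair the proof is complete.
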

\begin{proof}
See \cite[Lemma 5.5]{oscillators}.
\end{proof}
\begin{cor}\label{bound_for_same_blocks}
    When all the sets $\mathcal{B}_i$ in Theorem \ref{thm: Decoupling} have the same structure, the number of solutions of the system (\ref{blocksys}) is bounded by $(m!\cdot\mathrm{vol}(Q_1))^r$, where $Q_1$ is the polytope defined in (\ref{def_Q_i}).
\end{cor}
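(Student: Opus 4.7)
The corollary is essentially a direct combination of the preceding Lemma with the hypothesis that all blocks look alike, so the plan is short.

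First I would unpack the phrase ``same structure''. The natural interpretation, consistent with how the decoupling is set up, is that for each $i = 2, \ldots, r$ there is a bijection $\mathcal{B}_1 \to \mathcal{B}_i$ sending the distinguished variables of $\mathcal{B}_1$ to the distinguished variables of $\mathcal{B}_i$ and respecting monomial supports; equivalently, the polytope $Q_i$ of leading exponents is, up to relabelling coordinates, identical to $Q_1$. In particular, $\mathrm{vol}(Q_i) = \mathrm{vol}(Q_1)$ for every $i$.

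Next I would invoke the Decoupling Theorem (Theorem~\ref{thm: Decoupling}): since every $G_i = \{s_i h \mid h \in \mathcal{B}_i\}$ is a Khovanskii basis for the subalgebra it generates, the full set $G = \{sh \mid h \in \mathcal{A}\}$ is a Khovanskii basis for the subalgebra it generates. With this hypothesis of Theorem~\ref{updatedKush} verified (and the generic injectivity of $\phi, \phi_{\mathrm{in}}$ being part of the standing set-up), Theorem~\ref{updatedKush} gives the bound $m! \cdot \mathrm{vol}(Q)$ on the number of solutions of the system~\eqref{blocksys}, where $Q$ is the subdirect sum of the $Q_i$.

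Finally I would apply the Lemma just stated to factor
\[
m! \cdot \mathrm{vol}(Q) \;=\; \prod_{i=1}^{r} m! \cdot \mathrm{vol}(Q_i),
\]
and substitute $\mathrm{vol}(Q_i) = \mathrm{vol}(Q_1)$ from the first step, obtaining the claimed bound $\bigl(m! \cdot \mathrm{vol}(Q_1)\bigr)^{r}$. There is no real obstacle here: the corollary is a bookkeeping consequence of the Decoupling Theorem, Theorem~\ref{updatedKush}, and the volume-factorization lemma. The only point that requires a word of justification is the translation of ``same structure'' into equality of the $\mathrm{vol}(Q_i)$, which the proof should state explicitly.
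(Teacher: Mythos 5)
Your proposal is correct and matches the paper's (implicit) argument exactly: the paper likewise derives the bound $m!\cdot\mathrm{vol}(Q)$ from the Decoupling Theorem combined with Theorem~\ref{updatedKush}, factors it via the volume lemma for the subdirect sum, and uses the ``same structure'' hypothesis to replace each $\mathrm{vol}(Q_i)$ by $\mathrm{vol}(Q_1)$. Your explicit remark that ``same structure'' should be translated into equality of the volumes is a reasonable clarification of a point the paper leaves tacit.
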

\bigskip
\section{Dynamical Systems and Nonlinear Differential Equations}\label{Section4}
We apply the theoretical framework from the previous sections to a counting problem in physics. We exemplify our technique on a parametrically-driven \emph{nonlinear resonator}. We show that our procedure provides tighter bounds than existing methods and can capture the system's complex behavior. 

Nonlinear differential equations are prevalent in physics, describing a wide range of physical phenomena, from classical mechanics to quantum field theory; see, e.g., \cite{Strogatz1994,Rand_2005,Dykman2012,Guckenheimer_2013}. They arise in many areas of physics, including fluid dynamics \cite{Falkovich2011}, electromagnetism \cite{Shen2002,Fallis2003}, and statistical mechanics. Nonlinear differential equations are essential for understanding complex phenomena such as turbulence, chaos \cite{Genesio1992}, and phase transitions \cite{griffin1996bose,Soriente21}. The study of nonlinear differential equations is an active area of research, with numerous challenges and opportunities for both analytical and numerical approaches \cite{Nayfeh_2008,allen2017computer,Krack_2019}. 

Our main focus is on differential equations of the form
\begin{equation}\label{diff_eq_example}
\ddot{X}+\omega_{0}^{2}\left(1-\lambda\cos\left(2t\omega\right)\right)X+ \gamma\dot{X}+F_{\text{nl}}(X)=0,
\end{equation}
which governs a nonlinear resonator amplitude $X=X(t)$ with natural frequency~$\omega_0$ subject to a parametric frequency modulation at frequency $2\omega$ with amplitude $\lambda$,  damping $\gamma$, and a nonlinear restoring force function $F_{\text{nl}}(X)$. We assume that the latter force depends only on odd powers of $X$, i.e., it is a $(2n-1)$-degree polynomial function
$$F_{\text{nl}}(X)=\alpha_1 X^3 + ... + \alpha_{n-1} X^{2n-1}.$$ The equation above appears in many areas of physics as a model for the behavior of systems subject to nonlinear restoring forces and damping. Examples include the dynamics of a mass-spring system subject to periodic forcing or the motion of a charged particle in a magnetic field. In addition, the equation has been used to describe the behavior of optical~\cite{Shen2002}, electrical~\cite{Heugel22}, and mechanical systems~\cite{Lifshitz_2008}, such as the dynamics of a laser cavity~\cite{Haken1975} or the vibrations of a nanomechanical resonator~\cite{Dykman2012}.

Analytical and numerical methods are commonly used to find stationary solutions of non-autonomous differential equations like \eqref{diff_eq_example}, where the system's behavior has the same or commensurate time dependence as the  external fields. Analytical methods include secular perturbation theory~\cite{Lifshitz_2008} and averaging Floquet expansions~\cite{richards2012analysis,Dykman2012}, which provide approximate or exact solutions for specific types of differential equations. Numerical methods, such as brute-force time-evolution~\cite{Chitra2015} relying on finite difference or spectral methods, are used when analytical solutions are not available. However, numerical methods can be computationally expensive, and their accuracy can be affected by choice of parameters and the numerical scheme. Therefore, the choice of the method depends on the specific problem and the desired accuracy. Here, we focus on an analytical method called \emph{Harmonic Balance}~\cite{Krack_2019,hb_julia_code}.

\subsection{Harmonic Balance}
The method of Harmonic Balance is a powerful technique used to find stationary solutions of nonlinear differential equations~\cite{Luo_2012,Krack_2019}. 
It has been used in various applications, including the analysis of nonlinear resonators, nonlinear waves, and nonlinear circuits~\cite{hb_julia_code}. The strength of this method lies in its ability to capture the essential features of the nonlinear dynamics while providing an explicit solution for the solution's frequency components.

The premise of Harmonic Balance is to represent the solution as a sum of harmonic functions with unknown coefficients. These coefficients can be found by equating the coefficients of the harmonics in the differential equation, resulting in a set of algebraic equations. We use Theorem \ref{updatedBKK}, Theorem \ref{updatedKush} and the theory of fiber products of toric varieties to count the number of (complex) solutions of these algebraic equations. 

The subset of real solutions correspond to the physical stationary solutions of the differential equation. Therefore, the number of real solutions of the polynomial equations resulting from the method of Harmonic Balance is crucial in understanding the system's behavior. The number of real solutions can determine whether the system exhibits intricate behavior, such as multiple steady states, limit cycles, or chaotic dynamics~\cite{Strogatz1994}. Understanding the number of real solutions can provide valuable insights into the system's dynamics and inform the design of control strategies for engineering applications.

\subsection{Polynomial Equations}

When seeking the stationary behavior ($t\rightarrow\infty$) of the equation \eqref{diff_eq_example}, the Harmonic Balance method (HBM) can be used to obtain approximate solutions that have a harmonic time dependence, e.g., at the frequency of time-dependent coefficients or fractions thereof. The HBM assumes that the solution can be represented by a finite Fourier series, and proposes a truncated Fourier ansatz of the form:
\begin{equation}\label{eq:ansatz_generic}
X(t) = \sum_{k=1}^{M} u_k(t) \cos(\omega_k t)+v_k(t) \sin(\omega_k t)=\sum_{k=1}^{M} (a_k(t)e^{i\omega_k t}+a^*_k(t)e^{-i\omega_k t}),
\end{equation}
where $M$ is the number of harmonics with frequencies $\omega_1,\dots, \omega_M$ included in the approximation, $u_k$ and $v_k$ are slowly-evolving real amplitudes, or ``quadratures'', that contain information about the amplitudes and phases of the harmonics. . Moreover, the complex amplitudes are defined by $a_k(t):=u_k(t)+iv_k(t)$. Their purpose is to facilitate algebraic manipulations. 

The stationary solutions of the system then follow from the substitution of the ansatz \eqref{eq:ansatz_generic} into equation \eqref{diff_eq_example} and isolating terms with a given frequency of interest, as described below. The replacement of the ansatz into the parametric drive term leads to wave-mixing contributions of the form
\begin{align}\label{eq:param_drive}
    \cos(2\omega t)X(t)=& \frac{1}{2}\sum_k a_{k}(t)\left(e^{i(\omega_{k}-2\omega)t}+e^{i(\omega_{k}+2\omega)t}\right)+\mathrm{c.c.}.
\end{align}
(Here, and also in the following, $\mathrm{c.c.}$ refers to the complex conjugate of the first term).

Therefore, the substitution reveals oscillatory terms at frequencies $\omega_k\pm 2\omega$. Similarly, substitution into the nonlinear force terms $X(t)^{2i-1}$ introduces the terms
\begin{align*}
\frac{1}{2^{2i-1}}\sum_{\substack{n_{1}+n_{2}+\cdots+n_{q}=n\\ n_{1},n_{2},\cdots,n_{q}\geq0}
}\left(\begin{array}{c}
2i-1\\
n_{1},n_{2},\cdots,n_{q}
\end{array}\right)\prod_{l=1}^{q}\sum_{j=0}^{n_{l}}\left(\begin{array}{c}
n_{l}\\
j
\end{array}\right)a_{l}^{n_{l}-j}\left(a_{l}^{*}\right)^{j}e^{it(n_{l}-2j)\omega_{l}},
\end{align*}
which contain sum and differences of $2i-1$ frequencies, accounting for all of the so-called $(2i)$-wave mixing processes~\cite{Shen2002}. For clarity, we can evaluate the expression above for $i=2$, corresponding to a cubic nonlinearity. This is usually the lowest-order correction to the linear, exactly-solvable limit~\cite{hb_julia_code}. The term $X(t)^3$ introduces the following terms:
\begin{equation}\label{eq:4_wave_mix}
    \begin{aligned}
        (a_q e^{i\omega_q t} + a_q^{*}e^{-i\omega_q t})^3 &;\\
        3(a_j e^{i\omega_j t} + a_j^{*}e^{-i\omega_j t})(a_l e^{i\omega_l t} + a_l^{*}e^{-i\omega_l t})^2 &;\\
        6(a_r e^{i\omega_r t} + a_r^{*}e^{-i\omega_r t})(a_j e^{i\omega_j t} + a_j^{*}e^{-i\omega_j t})(a_l e^{i\omega_l t} + a_l^{*}e^{-i\omega_l t}) &,
    \end{aligned}
\end{equation}
where $q, j, l, r \in\{1,...,M\}, j\neq l \neq r$. In other words, we have four-wave mixing contributions that oscillate at the frequency combinations $$\Delta\omega_{r,j,l}^{s_r,s_j,s_l}=s_r\omega_r+s_j\omega_j+s_l\omega_l,$$
where $s_r,s_j,s_{l}=\pm1$~\cite{Shen2002}.

The stationary solution of the equation \eqref{diff_eq_example} (also called stationary-motion condition) would directly follow from $\ddot{X}=\dot{X}=0$ in a time-independent (autonomous) differential equation. A way to simplify this calculation in the non-autonomous case is via Krylov-Bogoliubov time-averaging of the equation of motion \ref{diff_eq_example} with a time $T$~\cite{Rand_2005}. This approach approximates a system $\dot{X}=\varepsilon f(X,t,\varepsilon)$ dependent on a small parameter $\varepsilon\ll 1$ by the autonomous system~\cite{Rand_2005}
\begin{align}\label{eq:av_eq}
    \left(\begin{array}{c}
\dot{\mathbf{u}}\\
\dot{\mathbf{v}}
\end{array}\right)=\frac{\varepsilon}{T}\int_0^T f(\mathbf{u},\mathbf{v},t,0) dt\equiv \varepsilon\bar{f}(\mathbf{u},\mathbf{v}),
\end{align}
where the $2M$-vectors $\mathbf{u}=(u_1,\dots,u_M)$ and $\mathbf{v}=(v_1,\dots,u_M)$ regroup the harmonic ansatz amplitudes.  The averaging method relies on the cancellation of weak oscillations over one period, leaving only the constant averaged terms that contribute to long-term behavior. This method is valuable when the nonlinear system has a separation of time scales, with fast oscillations on a much shorter scale than the slow dynamics of $\mathbf{u}$ and $\mathbf{v}$. Importantly, the dynamical equation for $(\mathbf{u},\mathbf{v})$ captures the asymptotic behavior of the original system. Additionally, the solutions $\mathbf{u}$ and~$\mathbf{v}$ provide approximate values of $X$ over a finite time period inversely proportional to the slow time scale, $1/\varepsilon$. Higher orders can be explored using the so-called \emph{near-identity transformation}~\cite{Holmes1981}.

Without further specification of the ansatz frequencies $\omega_k$'s or the averaging time $T$, no harmonics can be allowed to have finite amplitudes in the stationary limit, namely  $a_k=0$ ($u_k = v_k=0$) is the only possible solution. Non-trivial solutions can be revealed by the remaining terms in the equation \eqref{eq:av_eq} after selecting an appropriate value for $T$. The simplest nontrivial example arises when solving the homogeneous differential equation with $\gamma=0$. The stationary solution, in this case contains only the single harmonic at frequency $\omega_1=\omega_0$ i.e. $X_0(t)=a_1e^{i\omega_0 t}+\mathrm{c.c.}$, and $a_{k>1}=0$. The solution $X_0(t)$ (natural oscillations) can be found from substitution of the ansatz \eqref{eq:ansatz_generic} and choosing  the natural period $T=2\pi/\omega_0$ as the averaging period. The averaging method can be used to introduce corrections to such natural oscillations.  Assuming the system is initially therefore described by the single harmonic with $\omega_1=\omega_0$, the impact of parametric drive will be relevant when $\omega_1=\omega_0\approx 2\omega$: the resulting term in the equation \eqref{eq:param_drive} will evolve slowly with time and prevail after averaging (indeed, for an arbitrary long averaging time~$T\rightarrow\infty$ if the condition is exact $\omega_0= 2\omega$). The internal frequency relationship $\omega_0= 2\omega$ exemplifies a resonant condition known as \emph{parametric resonance}.
Parametric resonance amplifies and attenuates oscillatory motion at frequency $\omega_0$. More intricate frequency conditions, often referred to as "multiphoton" resonances in nonlinear optics~\cite{Shen2002}, can enhance nonlinear wave-mixing processes in the equation \eqref{eq:4_wave_mix}. Generally, a nonlinear contribution becomes significant when averaged over a time $T_{r,j,l}^{s_r,s_j,s_l}=2\pi/\Delta\omega_{r,j,l}^{s_r,s_j,s_l}\gg 1$ provided that $\omega_{r,j,l}^{s_r,s_j,s_l}\simeq 0$. Furthermore, we reduce to terms oscillating at frequencies $\Delta\omega_{r,j,l}^{+,-,\pm}$ with $\omega_r=\omega_j$, which describe self-phase and cross-phase modulation processes in nonlinear optics. Here, the phase of propagating waves within a material is influenced by the material's intensity-dependent refractive index \cite{kippenberg2004nonlinear}. 

The time-averaging process now converts the stationary-motion condition into a set of polynomial equations. We aim to find all solutions for these equations. In this article, we count the number of complex solutions of the corresponding system for two cases: 
\begin{itemize}
    \item $M=1$ and arbitrary $n, N$;
    \item $n=2, N = 1$ and arbitrary $M$.
\end{itemize}
The equations of motion obtained from this averaging procedure are equivalent to those found via the Harmonic Balance step.

\subsubsection{One resonator and $M=1$ frequency}
In the simplest version of the ansatz, we include a single frequency ($M=1$).
For illustration purposes, we first consider a cubic nonlinearity ($2n-1=3$), which is often the lowest-order correction to the linear equation of motion of a harmonic oscillator. It captures the leading nonlinear effects that become significant when the amplitude of oscillation becomes larger. Using the ansatz, we obtain
\begin{align*}
&\left[\ddot{u}+\gamma\dot{u}+2\omega\dot{v}+u\left(\omega_{0}^{2}-\omega^{2}\right)+\gamma\omega v-\frac{\lambda\omega_{0}^{2}}{2}u+\frac{3\alpha\left(u^{3}+uv^{2}\right)}{4}\right]\cos(\omega t)\nonumber\\
+&\left[\ddot{v}+\gamma\dot{v}-2\omega\dot{u}+v\left(\omega_{0}^{2}-\omega^{2}\right)-\gamma\omega u+\frac{\lambda\omega_{0}^{2}}{2}v+\frac{3\alpha\left(v^{3}+u^{2}v\right)}{4}\right]\sin(\omega t)\nonumber\\
+&\left[\frac{\alpha\left(u^{3}-3uv^{2}\right)}{4}-\frac{\lambda\omega_{0}^{2}}{2}u\right]\cos(3\omega t)+\left[\frac{\alpha\left(3u^{2}v-v^{3}\right)}{4}-\frac{\lambda\omega_{0}^{2}}{2}v\right]\sin(3\omega t)=0.
\end{align*}
To identify the stationary condition of this system, we impose Harmonic Balance by matching terms with the same oscillatory dependence on both sides of the equation. This matching requires the same hypothesis as the averaging method: the slow evolution of the amplitudes $u(t)$ and $v(t)$ in relation to the ansatz periods $2\pi/\omega_k$. In line with this assumption, note that the terms that oscillate at $3\omega$ the $X^{3}$ term generates  (frequency up-conversion) are disregarded. Such  high-frequency contributions they are not properly balanced under the current ansatz. The resulting conditions for $u,v$ read $a_0 + a_1 u + a_2 v + a_3 u(u^2 + v^2) =b_0 + b_1 u + b_2 v + b_3 v(u^2 + v^2) =0$
with coefficients $a_0,\ldots,b_3$ that depend on the parameters $\alpha, \omega,\omega_0,\lambda$.

The generalization to higher nonlinearities, i.e., $2n-1$ is the highest degree of~$X(t)$ in the equation \eqref{diff_eq_example}, produces  polynomial equations of the form 
\begin{equation} \label{equations_one_oscillator}
\begin{aligned}
a_0 + a_1 u + a_2 v + a_3 u(u^2 + v^2) + \ldots + a_{n+1}u(u^2 + v^2)^{n-1} &= 0, \\
b_0 + b_1 u + b_2 v + b_3 v(u^2 + v^2)  + \ldots + b_{n+1}v(u^2 + v^2)^{n-1} &=0.
\end{aligned}
\end{equation}
In order to prove this statement, it is sufficient to focus on the expression that results from the insertion of the ansatz for $X(t)$ in a generic term $X^{2n-1}$.
We express $X(t)$ in complex form
\begin{equation*}
    X(t)=u\cos(\omega t)+v\sin(\omega t)=\frac{1}{2}(ae^{i\omega t}+a^{*}e^{-i\omega t}),
\end{equation*}
where $a=u+iv$.  Note that the extraction of the 
respective coefficients of $\cos(\omega t)$ and $\sin(\omega t)$ in the Harmonic Balance approach is equivalent to filtering the equations with a window function $e^{i\omega t}$ along the timescale $2\pi/\omega$, i.e., a finite time Fourier transform.

\begin{lem}\label{Fourier1}
   The Fourier transform $(ae^{i\omega t}+a^{*}e^{-i\omega t})^{2k+1}$ over the  oscillation period $T=2\pi/\omega$ yields ${2k+1 \choose k} a|a|^{2k}$.
\end{lem}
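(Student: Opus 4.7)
The plan is to prove this by a direct binomial expansion followed by orthogonality of the exponentials. I read the statement as saying that the $e^{i\omega t}$-Fourier coefficient
\[
\frac{1}{T}\int_0^T \bigl(a e^{i\omega t}+a^{*}e^{-i\omega t}\bigr)^{2k+1}\, e^{-i\omega t}\, dt
\]
over one period $T=2\pi/\omega$ equals $\binom{2k+1}{k}\,a|a|^{2k}$, since this is precisely the filtering procedure described in the paragraph preceding the lemma.

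First I would apply the binomial theorem to the cube-like power, writing
\[
\bigl(a e^{i\omega t}+a^{*}e^{-i\omega t}\bigr)^{2k+1}
=\sum_{j=0}^{2k+1}\binom{2k+1}{j} a^{\,2k+1-j}(a^{*})^{j}\, e^{i(2k+1-2j)\omega t}.
\]
Each summand is a pure exponential $c_j e^{i m_j \omega t}$ with integer frequency index $m_j=2k+1-2j$. Because $2k+1$ is odd, $m_j$ runs over odd integers between $-(2k+1)$ and $2k+1$, so in particular $m_j\neq 0$ for every $j$.

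Next, I would invoke the orthogonality relation $\tfrac{1}{T}\int_0^T e^{i\ell\omega t}\,dt=\delta_{\ell,0}$ for integer $\ell$. Multiplying the expansion by $e^{-i\omega t}$ shifts each exponential frequency index to $m_j-1 = 2k-2j$, and the integral over one period picks out only the summand with $m_j-1=0$, i.e.\ $j=k$. The surviving term is
\[
\binom{2k+1}{k}\, a^{\,k+1}(a^{*})^{k}
=\binom{2k+1}{k}\, a\,(aa^{*})^{k}
=\binom{2k+1}{k}\, a\,|a|^{2k},
\]
which is exactly the claimed value. There is essentially no main obstacle here: the proof is a one-line binomial expansion plus the orthogonality of $\{e^{i\ell\omega t}\}_{\ell\in\mathbb{Z}}$ on $[0,T]$; the only thing to check carefully is the index bookkeeping ($2k+1-2j-1=0 \Leftrightarrow j=k$) and the identification $a^{k+1}(a^*)^k=a|a|^{2k}$.
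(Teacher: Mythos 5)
Your proposal is correct and follows essentially the same route as the paper: binomial expansion of $(ae^{i\omega t}+a^{*}e^{-i\omega t})^{2k+1}$, followed by multiplying with $e^{-i\omega t}$ and using the orthogonality of the exponentials over one period to isolate the $j=k$ term $\binom{2k+1}{k}a|a|^{2k}$. The only cosmetic difference is that the paper writes the coefficients as $a^{2k+1-2j}|a|^{2j}$ rather than $a^{2k+1-j}(a^{*})^{j}$, which are equal.
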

\begin{proof}
    Applying the binomial theorem, we obtain
\begin{equation*}
\begin{aligned}
    (ae^{i\omega t}+a^{*}e^{-i\omega t})^{2k+1} =\sum_{j=0}^{2k+1}\left(\begin{array}{c}
2k+1\\
j
\end{array}\right)a^{2k+1-2j}|a|^{2j}e^{i(2k+1-2j)\omega t}.
\end{aligned}
\end{equation*}
We can succinctly express the Fourier transform as
\begin{equation*}
\begin{aligned}
    I_k(\omega)=\frac{\omega}{2\pi}\int_0^{2\pi/\omega} \text{d}t\hspace{1mm}(ae^{i\omega t}+a^{*}e^{-i\omega t})^{2k+1} e^{-i\omega t}\approx &\sum_{j=0}^{2k+1}\left(\begin{array}{c}
2k+1\\
j
\end{array}\right)a^{2k+1-2j}|a|^{2j}\delta_{j,k}\\
=&{2k+1 \choose k} a|a|^{2k}.
\end{aligned}
\end{equation*}
where we defined the Kronecker delta symbol $\delta_{j,k}=\frac{1}{2\pi}\int_0^{2\pi}\text{d}\varphi\hspace{1mm}e^{i(j-k)\varphi}$.
\end{proof}
The resulting Harmonic Balance equations are 
\begin{equation}\label{FG_original}
\begin{aligned}
    \left(\frac{\omega_{0}^{2}-\omega^{2}}{2}-\frac{\lambda\omega_{0}^{2}}{4}\right)u+\frac{\gamma\omega}{2} v+\sum_{k=1}^{n-1}\alpha_k F_k(u,v)&= 0,\\	
\left(\frac{\omega_{0}^{2}-\omega^{2}}{2}+\frac{\lambda\omega_{0}^{2}}{4}\right)v-\frac{\gamma\omega}{2} u+\sum_{k=1}^{n-1}\alpha_k G_k(u,v)&= 0,
\end{aligned}
\end{equation}
where the terms in the sum are given by $F_k(u,v)=\mathrm{Re}(I_k(\omega))$ and $G_k(u,v)=\mathrm{Im}(I_k(\omega))$ with $I_k(\omega)=\frac{\omega}{2\pi}\left[\int_{0}^{2\pi/\omega}\text{d}t\hspace{1mm}X(t)^{2k+1}e^{-i\omega t}\right]$.
By Lemma \ref{Fourier1},
\begin{align*}    F_k(u,v)=&\frac{1}{2^{2k+1}}\left(\begin{array}{c}
2k+1\\
k
\end{array}\right)u(u^2 + v^2)^{k},\\
G_k(u,v)=&\frac{1}{2^{2k+1}}\left(\begin{array}{c}
2k+1\\
k
\end{array}\right)v(u^2 + v^2)^{k}.
\end{align*}
Thus, we obtained algebraic equations exactly of the form \eqref{equations_one_oscillator}, where the coefficients $a_0,...,b_{n+1}$ satisfy the linear relations
\begin{equation}\label{linear_relations}
a_1 +b_2 = \omega_0^2 - \omega^2,\quad a_2+b_1 = 0\quad \text{and}\quad a_k = b_k \text{ for $k=3,...,n+1$}.
\end{equation}

\subsubsection{$N$ coupled resonators and $M=1$ frequency}

Coupled resonators are described by the following system of differential equations, extending (\ref{diff_eq_example}):
\begin{equation*}
\ddot{X_i}+\omega_{0}^{2}\left(1-\lambda\cos\left(2t\omega\right)\right)X_i+ \gamma\dot{X_i}+\alpha_1 X_i^3 + ... + \alpha_{n-1} X_i^{2n-1} + \sum_{l\neq i}J_{i,l}X_l=0,
\end{equation*}
 where $J_{i,l}$ is a new matrix of parameters representing the coupling between our~$N$ resonators. For $N$ coupled resonators with $(2n-1)$-degree nonlinearity and $1$~leading frequency this leads to  $2N$ algebraic equations in $2N$ variables $u = (u_1,...,u_N)$, $v= (v_1,...,v_N)$, which come in $N$ pairs: 
\begin{equation}\label{equations_N}
\begin{aligned}
& \begin{bmatrix}
p_1(u,v), &q_1(u,v),&\hdots ,&p_N(u,v), &q_N(u,v)    
\end{bmatrix}=0, \text{ where } \\
p_{i}(u,v) &= a_{0,i} + a_{1,i} u_i + a_{2,i} v_i + \ldots + a_{n+1,i}u_i(u_i^2 + v_i^2)^{n-1} + \frac{1}{2}\sum_{j\neq i}J_{j,i}u_j,\\
q_{i}(u,v)&= b_{0,i} + b_{1,i} u_i + b_{2,i} v_i + \ldots + b_{n+1,i}v_i(u_i^2 + v_i^2)^{n-1} +  \frac{1}{2}\sum_{j\neq i}J_{j,i}v_j,\\
\end{aligned}
\end{equation}
where $a_{1, i} + b_{2, i} = \omega_0^2 - \omega^2, \; a_{2,i} = -b_{1,i}\; \text{and}\; a_{k,i} = b_{k,i}$ for $i=1,\dots,N$ and $k = 3,\dots,n+1$.


\subsubsection{One resonator with $n=2$ and $M>1$ frequencies}
For clarity, we will consider only differential equations with a cubic nonlinearity, i.e.,
\begin{equation*}\label{diff_eq_cubic}
\ddot{X}+\omega_{0}^{2}\left(1-\lambda\cos\left(2t\omega\right)\right)X+ \gamma\dot{X}+\alpha X^3 =0.
\end{equation*}

Now, we return to the general ansatz of the form \eqref{eq:ansatz_generic}, where $\omega_1=\omega$ and $\omega_k/\omega$ is irrational for $k> 1$ (i.e., the frequencies considered are incommensurate with  $\omega$ and with each other). Hence, we assume the frequencies $\omega_k$ are isolated from one another, i.e., they do not form a continuous band.

The system of polynomial equations that arises from the coefficients of the harmonics with frequencies $\omega_k$'s, is now defined in $2M$ variables $u = (u_1,\dots,u_M)$, $v = (v_1,\dots,v_M)$ that appear in $M$ pairs:
$$\begin{bmatrix}
p_1(u,v), &q_1(u,v),&\dots ,&p_M(u,v), &q_M(u,v)    
\end{bmatrix}=0.$$
Each polynomial pair can be written as
\begin{align*}
p_k(u,v)&=\left(\frac{\omega_{0}^{2}-\omega^{2}}{2}-\frac{\lambda\omega_{0}^{2}}{4}\delta_{k,1}\right)u_k +\frac{\gamma\omega}{2} v_k+\alpha F_k(u,v),\\	q_k(u,v)&=\left(\frac{\omega_{0}^{2}-\omega^{2}}{2}+\frac{\lambda\omega_{0}^{2}}{4}\delta_{k,1}\right)v_k -\frac{\gamma\omega}{2} u_k+\alpha G_k(u,v).
\end{align*}
The linear terms in $u_m$ and $v_m$ within the parentheses, related to the parametric-frequency modulation equation \eqref{eq:param_drive}, are similar to those in the $M=1$ case. When averaging over such a term, we assumed a long averaging timescale $T\gg1$ such that only the contribution from the first harmonic $\omega_1=\omega$ prevails.

The additional terms in $p_k(u,v)$ and $q_k(u,v)$, namely $F_k(u,v)$ and $G_k(u,v)$  are degree three polynomials in analogy to the result of Lemma~\ref{Fourier1} applied to \eqref{eq:4_wave_mix}.  To comply with our framework, we can simplify matters and retain terms oscillating with frequencies $|\omega_r-\omega_j\pm\omega_l|$, and further assume the \textit{degenerate} case where at least two frequencies are nearly equal ($\omega_r\approx\omega_j$). As previously mentioned, these terms describe self-phase and cross-phase modulation nonlinear processes \cite{kippenberg2004nonlinear}. 

Under these assumptions, averaging over the long timescale compared with other periods $T^{+,+,\pm}_{r,j,l}=2\pi/|\omega_r+\omega_j\pm\omega_l|$ keeps only contributions from the cube 
$$(a_k e^{i\omega_k t} + a_k^{*}e^{-i\omega_k t})^3$$ 
from \eqref{eq:4_wave_mix} and summands such as 
\begin{equation}\label{terms}
   3(a_k e^{i\omega_k t} + a_k^{*}e^{-i\omega_k t}) \cdot 2  |a_l|^2, \quad k\neq l\in\{1,...,M\}.
\end{equation} 
 By Lemma \ref{Fourier1}, the relevant averages give us $3 a_k|a_k|^2$ and $3\cdot 2 \cdot a_k|a_l|^2$ of the terms~\eqref{terms}. Therefore,
\begin{align*}
  F_k(u,v) &\approx \frac{3}{8}u_k(u_k^2 + v_k^2) + \frac{3}{4}\sum_{l\neq k}u_k(u_l^2 + v_l^2),\\ 
  G_k(u,v) &\approx \frac{3}{8}v_k(u_k^2 + v_k^2) + \frac{3}{4}\sum_{l\neq k}v_k(u_l^2 + v_l^2).
\end{align*}
As a result, the corresponding system of algebraic equations has the form:
\begin{equation}\label{equations_M}
    \begin{aligned}
        & \begin{bmatrix}
p_1(u,v), &q_1(u,v),&\hdots ,&p_M(u,v), &q_M(u,v)    
\end{bmatrix}=0, \text{ where}\\
p_k(u,v)&= a_{k,0} +  a_{k,1} u_k + c_k v_k + d_k u_k(u_k^2+v_k^2) + 2d_k \sum\limits_{j\neq k} u_k(u_j^2 + v_j^2),\\	
q_k(u,v)&= b_{k,0} - c_k u_k +  b_{k,1} v_k + d_k v_k(u_k^2+v_k^2) + 2d_k \sum\limits_{j\neq k} v_k(u_j^2 + v_j^2).\\		
    \end{aligned}
\end{equation}

\bigskip
\section{Algebraic Degree of Coupled resonators: Higher Nonlinearities}\label{Section5}

We now count the number of complex solutions of the system of polynomial equations \eqref{equations_one_oscillator} that arise for $N=1$ resonator, $M=1$ frequency and nonlinearity of degree $2n-1\geq 3$.
\begin{thm}\label{thm:count1}
For general $a_0,\ldots, b_{n+1}$ the number of complex solutions $(u,v)\in \mathbb C^2$ of the system of polynomial equations
\begin{equation*}
\begin{aligned}
a_0 + a_1 u + a_2 v + a_3 u(u^2 + v^2) + \ldots + a_{n+1}u(u^2 + v^2)^{n-1} &=0\\
b_0 + b_1 u + b_2 v + b_3 v(u^2 + v^2)  + \ldots + b_{n+1}v(u^2 + v^2)^{n-1} &=0\\
\end{aligned}
\end{equation*}
is $4n-3$. This also holds if $a_0,\ldots, b_{n+1}$ are determined by a general choice of the physical parameters $\omega_0,\omega,\lambda,\gamma,\alpha_1,\ldots\alpha_k$ from (\ref{FG_original}).
\end{thm}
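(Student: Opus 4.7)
The proof applies Theorem~\ref{updatedKush} to the single block
\[
\mathcal A = \{\,1,\ u,\ v,\ u(u^2+v^2)^k,\ v(u^2+v^2)^k : 1 \le k \le n-1\,\}
\]
of $2n+1$ polynomials, so that both equations of the system are linear combinations of elements of $\mathcal A$. With a graded monomial order satisfying $u \succ v$, the leading terms of $\mathcal A$ are $1,\ u,\ v,\ u^{2k+1},\ u^{2k}v$ for $1 \le k \le n-1$. The exponents of these leading monomials form the trapezoid $Q$ with vertices $(0,0),(2n-1,0),(2n-2,1),(0,1)$, whose area is $\tfrac{(2n-1)+(2n-2)}{2}=\tfrac{4n-3}{2}$, so that $2!\cdot\mathrm{vol}(Q)=4n-3$ matches the desired count.

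To invoke Theorem~\ref{updatedKush} I verify its two hypotheses. Generic injectivity of both $\phi$ and $\phi_\mathrm{in}$ is immediate because the three coordinates $[1:u:v]$ already recover $(u,v)$. The substantive step is the Khovanskii basis condition, which I would check through Theorem~\ref{thmKavehManon}. Coordinatize $\PP^{2n}$ by $z_0,z_1,\ldots,z_{2n}$ so that $\phi_\mathrm{in}(u,v)=[s:su:sv:su^3:su^2v:\ldots:su^{2n-1}:su^{2n-2}v]$, and extract explicit binomial generators of the toric ideal $I(Y)$; representative examples are
\[
z_1^{2k+1}-z_0^{2k}z_{2k+1},\qquad z_1^{2k}z_2-z_0^{2k}z_{2k+2},\qquad z_{2k+1}z_{2j+2}-z_{2k+2}z_{2j+1}.
\]
For each such generator, I substitute the polynomial parametrization $z_{2k+1}\mapsto s\cdot u(u^2+v^2)^k$, $z_{2k+2}\mapsto s\cdot v(u^2+v^2)^k$ and run Algorithm~\ref{subduction} with $G=\{sh:h\in\mathcal A\}$. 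For the first family the substituted polynomial equals $s^{2k+1}\bigl(u^{2k+1}-u(u^2+v^2)^k\bigr)$, whose leading monomial $-k\,s^{2k+1}u^{2k-1}v^2$ factors as $\LT(g_0)^{2k-2}\LT(g_{2k-1})\LT(g_2)^2$ (or simply $\LT(g_1)\LT(g_2)^2$ when $k=1$); one subduction step eliminates it, and an induction on the $(u,v)$-degree drives the remainder to zero. The other families are handled analogously, delivering a Khovanskii basis and hence the count $4n-3$ for generic coefficients.

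Finally, to extend the conclusion to the physical system, I would exhibit an explicit choice of physical parameters $(\omega_0,\omega,\lambda,\gamma,\alpha_1,\ldots,\alpha_{n-1})$ for which the system attains the maximum $4n-3$ complex solutions. By upper semicontinuity of the fiber cardinality in a flat family of zero-dimensional schemes, the locus of physical parameters yielding fewer solutions is a proper Zariski-closed subset; since $4n-3$ is the universal upper bound, generic physical parameters give exactly $4n-3$ solutions. The main obstacle will be the Khovanskii basis step: producing a complete list of binomial generators of $I(Y)$ and verifying subduction to zero for each is combinatorially delicate, though the monomial structure makes each individual reduction routine.
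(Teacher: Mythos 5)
Your overall strategy coincides with the paper's: the upper bound via Theorem~\ref{updatedKush} with the same trapezoid of normalized volume $4n-3$, the Khovanskii property via Theorem~\ref{thmKavehManon}, and the lower bound via an explicit specialization plus semicontinuity. However, both substantive steps are left with genuine gaps. For the Khovanskii step, Theorem~\ref{thmKavehManon} requires subducting a \emph{complete} generating set of $I(Y)$, and you only offer ``representative examples.'' Your list demonstrably misses relations: for instance the quadrics $z_lz_m-z_{l+2}z_{m-2}$ between coordinates of equal parity (such as $z_3z_7-z_5^2$) are not among your three families, and they do not obviously lie in the ideal your examples generate (only a $z_0$-multiple of them does). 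The paper devotes Lemma~\ref{subs:idealgens} to exhibiting an explicit set $\{g_m, f_{l,m}\}$ and proving, via a Gr\"obner basis argument, that it generates $I(Y)$; this is the combinatorial heart of the proof and cannot be waved at. Moreover, for your first family $z_1^{2k+1}-z_0^{2k}z_{2k+1}$ the subduction does not terminate in one step (after the first reduction one is left with $s^{2k+1}\bigl(u^{2k+1}-u(u^2+v^2)^{k-1}(u^2-(k-1)v^2)\bigr)$, and the asserted ``induction on the $(u,v)$-degree'' is not carried out). The paper avoids this entirely by choosing generators $g_m=z_0^2z_m-z_1^2z_{m-2}$ for which $g_m(sh_0,\dots,sh_{2n})=(sh_2)^2\cdot sh_{m-2}$ is already a monomial in the basis elements, so subduction terminates in a single step, while the $f_{l,m}$ vanish identically upon substitution.

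The lower bound is likewise only a plan: you say you ``would exhibit'' a specialization attaining $4n-3$ solutions, but producing one and counting its solutions is the content of that half of the argument. The paper takes $a_1=\omega_0^2-\omega^2$, $a_2=-b_1=a_{n+1}=b_{n+1}=1$ and all other coefficients zero, reduces to $v\bigl(1+(\omega_0^2-\omega^2)r^{2n-2}+r^{4n-4}\bigr)=0$ with $r^2=u^2+v^2$, and counts $2(2n-2)+1=4n-3$ isolated solutions. Crucially, for the second assertion of the theorem one must check that the specialization is compatible with the constraints~\eqref{linear_relations} imposed by the physical parametrization (the paper verifies this); your proposal does not address why genericity over the full coefficient space transfers to genericity over the smaller physical parameter space.
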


We prove Theorem \ref{thm:count1} in the next two subsections. First, however, let us use the theorem together with the results from Section \ref{Section3} to compute the general number of solutions of the system \eqref{equations_N}, where $N$ systems of the form in Theorem \ref{thm:count1} are coupled. In this case, we have in total $2N$ equations in $2N$ variables $u = (u_1,...,u_N)$ and~$v= (v_1,...,v_N)$, which come in $N$ pairs:
\begin{equation}\label{N_oscillators}
\begin{aligned}
& \begin{bmatrix}
p_1(u,v), &q_1(u,v),&\hdots ,&p_N(u,v), &q_N(u,v)    
\end{bmatrix}=0, \text{ where }\\
p_{i}(u,v) &= a_{0,i} + a_{1,i} u_i + a_{2,i} v_i + \ldots + a_{n+1,i}u_i(u_i^2 + v_i^2)^{n-1} + \sum_{j\neq i}c_{j,i}v_j,\\
q_{i}(u,v)&= b_{0,i} + b_{1,i} u_i + b_{2,i} v_i + \ldots + b_{n+1,i}v_i(u_i^2 + v_i^2)^{n-1} +  \sum_{j\neq i}d_{j,i}u_j.\\
\end{aligned}
\end{equation}
\begin{thm}\label{main_thm_HC}
For general coefficients $a_{k,i}, b_{k,i}, c_{j,i}, d_{j,i}$ the number of complex solutions $(u,v)\in \CC^{2N}$ of the system \eqref{N_oscillators} is $(4n-3)^N$. This also holds if $a_{k,i}, b_{k,i}, c_{j,i}, d_{j,i}$ are determined by a general choice of the physical parameters $\omega_0,\omega,\lambda,\gamma,\alpha_1,\ldots\alpha_k$ from (\ref{FG_original}) (where we have one choice for each $i$).
\end{thm}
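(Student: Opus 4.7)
The plan is to reduce the $N$-coupled resonator system \eqref{N_oscillators} to $N$ independent single-resonator problems via the Decoupling Theorem \ref{thm: Decoupling}, and then invoke Theorem \ref{thm:count1} together with Corollary \ref{bound_for_same_blocks} to assemble the count. I would first set
\begin{equation*}
\mathcal{B}_i := \{1,\, u_i,\, v_i\} \cup \bigl\{u_i(u_i^2+v_i^2)^k,\ v_i(u_i^2+v_i^2)^k : 1 \leq k \leq n-1\bigr\},
\end{equation*}
and $\mathcal{A} := \bigcup_{i=1}^{N} \mathcal{B}_i$. Every $p_i$ and $q_i$ in \eqref{N_oscillators} is a $\CC$-linear combination of elements of $\mathcal{A}$: the $i$-th single-resonator block lies in $\mathcal{B}_i$, while the coupling terms $v_l$ or $u_l$ with $l \neq i$ come from $\mathcal{B}_l$. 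The sets $\mathcal{B}_i$ share no variables pairwise and each contains the constant $1$, so the hypotheses of the Decoupling Theorem are satisfied.

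Next, I would invoke the single-resonator Khovanskii-basis property underlying the proof of Theorem \ref{thm:count1}: the family $\{s_i h : h \in \mathcal{B}_i\}$ forms a Khovanskii basis for the subalgebra it generates, with leading-term polytope $Q_1$ satisfying $2!\cdot\mathrm{vol}(Q_1) = 4n-3$. The Decoupling Theorem then gives that $G := \{sh : h \in \mathcal{A}\}$ is itself a Khovanskii basis for the subalgebra of $\CC[s,u_1,v_1,\ldots,u_N,v_N]$ it generates. Generic injectivity of $\phi$ and $\phi_{\mathrm{in}}$ is immediate, since $\mathcal{A}$ contains the constant $1$ together with every variable $u_i,v_i$, so dehomogenizing by the coordinate corresponding to $1$ recovers $(u,v) \in \CC^{2N}$ in both cases (and the leading term of a variable is the variable itself). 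Hence the hypotheses of Theorem \ref{updatedKush} hold for the full $N$-resonator system, and Corollary \ref{bound_for_same_blocks} combined with Theorem \ref{updatedKush} yields the exact generic count $\bigl(2!\cdot\mathrm{vol}(Q_1)\bigr)^N = (4n-3)^N$, proving the first assertion.

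The main obstacle is the second assertion: that the count persists for generic \emph{physical} parameters rather than for generic algebraic coefficients, since the relations analogous to \eqref{linear_relations} (together with a single coupling matrix $J_{j,i}$ inducing the $c_{j,i}$ and $d_{j,i}$ terms) cut out a proper subvariety of coefficient space. I would handle this by upper-semicontinuity of the isolated complex root count combined with specialization: Theorem \ref{updatedKush} guarantees that the maximal count $(4n-3)^N$ is attained on a Zariski-open subset $U$ of algebraic-coefficient space, and it suffices to exhibit a single point in the image of the physical-parameter map that lies in $U$. For this, I would specialize to the fully decoupled locus $J_{j,i}=0$, where the system factors into $N$ independent single-resonator systems; Theorem \ref{thm:count1} asserts that generic physical parameters make each resonator contribute exactly $4n-3$ solutions, so some such specialization point lies in $U$. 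The pullback of $U$ to physical-parameter space is therefore a nonempty Zariski-open subset, which proves the second claim.
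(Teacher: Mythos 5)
Your upper-bound argument matches the paper's: Lemma \ref{Section5.2} supplies the single-resonator Khovanskii basis, the Decoupling Theorem \ref{thm: Decoupling} assembles these into a Khovanskii basis for the full system, and Corollary \ref{bound_for_same_blocks} bounds the count by $(4n-3)^N$. The gap is in your claim that Theorem \ref{updatedKush} ``yields the exact generic count'' for \eqref{N_oscillators}. The exactness part of Theorem \ref{updatedKush} holds for \emph{general} linear combinations of the elements of $\mathcal{A}$, whereas \eqref{N_oscillators} is a highly non-generic member of that family: in $p_i$ the coefficients of $v_i(u_i^2+v_i^2)^k$, of $u_j(u_j^2+v_j^2)^k$ and $v_j(u_j^2+v_j^2)^k$ for $j\neq i$, and of $u_j$ for $j\neq i$ are all forced to be zero (similarly for $q_i$). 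So ``general $a_{k,i},b_{k,i},c_{j,i},d_{j,i}$'' means general only on a proper linear subvariety of the coefficient space, where Theorem \ref{updatedKush} delivers just the upper bound. A lower bound must be supplied separately, and your proof of the first assertion does not contain one.

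The ingredients for the fix appear in your second paragraph, but the logic there is also off: you assert that the decoupled specialization ``lies in $U$'', where $U$ is the open set furnished by Theorem \ref{updatedKush}; that open set lives in the full space of linear combinations of $\mathcal{A}$, and the decoupled systems (indeed all systems of the form \eqref{N_oscillators}) lie in a proper closed subvariety, so there is no reason for them to meet $U$. The correct assembly --- and the one the paper uses --- is: specialize $c_{j,i}=d_{j,i}=0$ so that \eqref{N_oscillators} splits into $N$ independent copies of \eqref{equations_one_oscillator}; Theorem \ref{thm:count1} gives $(4n-3)^N$ isolated solutions there (also for physical parameters, since the single-resonator lower-bound example respects the relations \eqref{linear_relations}); by \cite[Theorem 7.1.4]{Sommese:Wampler:2005} the generic member of the structured (resp.\ physical) family then has at least that many isolated solutions; combined with the Khovanskii upper bound this pins the generic count at exactly $(4n-3)^N$.
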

\begin{proof}
First, let us consider system of the form \eqref{N_oscillators} with all parameters $c_{j,i}, d_{j,i}$ specialized to $0$ and $a_{j,i}, b_{j,i}$ general. Such system corresponds to decoupled resonators and is given by $N$ independent blocks of type \eqref{equations_one_oscillator}. By Theorem \ref{thm:count1}, the number of its complex solutions is $(4n-3)^N$. Since with the specialization of the coefficients the number of solutions can only decrease (see, e.g., \cite[Theorem 7.1.4]{Sommese:Wampler:2005}),  the number of isolated solutions of general system \eqref{N_oscillators} is at least $(4n-3)^N$.

For the upper bound, it is proved in Lemma \ref{Section5.2} below that 
\begin{align*}
\left\{ s_i,\ s_iu_i,\ s_iv_i,\ s_iu_i(u_i^2 + v_i^2), \ldots,\  s_iv_i(u_i^2 + v_i^2)^{n-1}\right\}
\end{align*} 
is a Khovanskii basis of the subalgebra they generate for each $i=1,...,N$. Then by Theorem \ref{thm: Decoupling}, the polynomials
\begin{align*}
\{ s,\ &su_1,\ sv_1,\ su_1(u_1^2 + v_1^2), \ldots,\  sv_1(u_1^2 + v_1^2)^{n-1},\\
&su_2,\ sv_2,\ su_2(u_2^2 + v_2^2), \ldots,\  sv_2(u_2^2 + v_1^2)^{n-1},\\
&\vdots \\
&su_N,\ sv_N,\ su_N(u_N^2 + v_N^2), \ldots,\  sv_N(u_N^2 + v_N^2)^{n-1}\}
\end{align*} 
also form a Khovanskii basis of the subalgebra they generate. Applying Corollary~\ref{bound_for_same_blocks}, we obtain that the number of solutions is at most $(4n-3)^N$.
\end{proof}

For the proof of Theorem \ref{thm:count1}, we first prove that $4n-3$ is a lower bound and second that $4n-3$ is also an upper bound for the number of solutions.

\subsection{Lower bound for Theorem \ref{thm:count1}}\label{proof_lower_bound}
Denote $r^2 := u^2 + v^2$. 
In the two equations of Theorem \ref{thm:count1}, we set $a_1 = \omega_0^2-\omega^2, a_2 = -b_1 = a_{n+1} = b_{n+1} = 1$ and the other parameters to zero. We get:
$$
(\omega_0^2-\omega^2)u + v + ur^{2n-2}=
-u + vr^{2n-2}= 0.
$$ 
Notice that $(0,0)$ is a solution. Moreover, if $u = 0$, we also have $v = 0$. Conversely, if $v = 0$, we also have $u = 0$. So let us assume $u, v \neq 0$. We plug $u=vr^{2n-2}$ into the first equation to get 
\begin{align*}
v\left(1 + (\omega_0^2-\omega^2)r^{2n-2} + r^{4n-4}\right) = 0.
\end{align*}
Treating $r^2$ as a variable this has $2n-2$ distinct solutions for $\omega_0^2-\omega^2 \neq \pm 2$. Denote as~$s:=r^2$ such a solution. This gives the system
$
u^2 + v^2 -s= 
-u +vs^{n-1}= 0
$
with~$2$ solutions for $\omega_0^2-\omega^2 \neq 0$. In total, we therefore have $2 \cdot (2n-2) + 1 = 4n - 3$ isolated solutions in~$\CC^2$ for general $\omega_0^2-\omega^2$. Since substituting coefficients by complex numbers can only decrease the number of isolated solutions (see, e.g., \cite[Theorem 7.1.4]{Sommese:Wampler:2005}), this shows that $4n-3$ is a lower bound. Our choices of parameters satisfy the linear relations in (\ref{linear_relations}). Thus, the lower bound holds also in this case.

\subsection{Upper bound for Theorem \ref{thm:count1}}
When $a_0,b_0\neq 0$, and other parameters are general, then a zero $(u,v)$ must satisfy $u,v\neq 0$; i.e., it must be in the torus $(\mathbb C\setminus\{0\})\times (\mathbb C\setminus\{0\})$. We can therefore use Theorem \ref{updatedKush} to compute an upper bound for the number of zeros for general $a_0,\ldots,b_{n+1}$. 

The main task is thus to prove that
\begin{align*}
\left\{ s,\ su,\ sv,\ su(u^2 + v^2),\ sv(u^2 + v^2),\ \ldots,\ su(u^2 + v^2)^{n-1},\ sv(u^2 + v^2)^{n-1}\right\}
\end{align*} 
is a Khovanskii basis of the subalgebra that these polynomials generate. For this, we consider a monomial order $\succ$ with $u \succ v$. We write 
$$h_{0} = 1 \quad \text{and} \quad h_{2k-1}=u(u^2+v^2)^{k-1},\ h_{2k}=v(u^2+v^2)^{k-1},\ k\geq 1.$$ 
The leading terms of the polynomials~$sh_0, sh_1, sh_2, \ldots, sh_{2n}$ with respect to the chosen order $\succ$ are respectively
$$s,\ su,\ sv,\ su^3,\ svu^2,\ su^5,\ svu^4,\ \ldots,\ su^{2n-1},\ svu^{2n-2}.$$
The convex hull of the exponents of these leading monomials is the trapezoid.

\bigskip
\begin{center}
\resizebox{0.8\columnwidth }{!}{
   \begin{tikzpicture}[scale=0.80]
	\draw[domain=0:9] plot (\x,{0}); 
	\draw[-] (0,0) -- (0,1);
	\draw[domain=0:8] plot (\x,{1}); 
	\draw[domain=8:9] plot (\x,{-\x + 9});
	
	\filldraw[black] (0,0) circle (2pt) node[anchor= north east] {(0,0)};
	\filldraw[black] (1,0) circle (2pt) node[anchor= north] {(1,0)};
	\filldraw[black] (3,0) circle (2pt) node[anchor= north] {(3,0)};
	\filldraw[black] (5,0) circle (2pt) node[anchor= north] {(5,0)};
	\filldraw[black] (2,1) circle (2pt) node[anchor= north] {(2,1)};
	\filldraw[black] (4,1) circle (2pt) node[anchor= north] {(4,1)};
	\filldraw[black] (0,1) circle (2pt) node[anchor= north east] {(0,1)};
	\filldraw[black] (9,0) circle (2pt) node[anchor= north west] {($2n-1$,0)};
	\filldraw[black] (8,1) circle (2pt) node[anchor= west] {\;\;($2n-2$,1)};
\end{tikzpicture}
}   
\end{center}
\bigskip

\noindent Its normalised volume is $(2n-1)\cdot 2 - 1 = 4n-3$. Therefore, if we can show that $sh_0,\ldots,sh_{2n}$ form a Khovanski basis, Theorem \ref{updatedKush} implies that $4n-3$ is an upper bound for the number of solutions.

Our proof strategy is based on the method by Kaveh and Manon that we have recalled in Theorem~\ref{thmKavehManon}. Let $z_0,z_1,\ldots,z_{2n}$ be coordinates on $\PP^{2n}$ and $Y \subset \PP^{2n}$ be a projective toric variety obtained from the monomial map
$$\phi\colon (s, u, v) \mapsto [\underset{z_0}{s}: \underset{z_1}{su}: \underset{z_2}{sv}: \underset{z_3}{su^3}: \underset{z_4}{svu^2}: \underset{z_5}{su^5}: \underset{z_6}{svu^4}: \ldots: \underset{z_{2n-1}}{su^{2n-1}}: \underset{z_{2n}}{svu^{2n-2}}] \in \PP^{2n}.$$
In the first step, we determine the generators of the toric ideal $I(Y)$ (Lemma~\ref{subs:idealgens}). Then, we show that after substituting the $sh_i$'s into the generating binomials of~$I(Y)$ they reduce to zero  (Lemma~\ref{subs:subdgens}), so that the $sh_i$'s indeed form a Khovanskii basis by Theorem \ref{thmKavehManon}. 

\begin{lem}\label{subs:idealgens}
Consider the polynomials
\begin{equation} \label{gens}
\begin{aligned}
& g_m := z_0^2z_m - z_1^2z_{m-2}\quad  \text{ for } 2n \geq m \geq 3, \\
& f_{l,m} := \begin{cases}
z_lz_m - z_{l+1}z_{m-1},& \text{ if } l+m \text{ is odd } \\
z_lz_m - z_{l+2}z_{m-2},& \text{ if } l+m \text{ is even } 
\end{cases}\quad  \text{ for } 2n \geq m \geq l+3 \geq 4. 
\end{aligned}
\end{equation}
They form a Gröbner basis of $I(Y)$ with respect to the DegLex monomial ordering ($z_0 > z_1 > \ldots > z_{2n}$). In particular,
$$I(Y) = \langle g_3,\ \ldots,\  g_{2n},\  f_{1,4},\  \ldots, f_{2n-3,2n}\rangle.$$
\end{lem}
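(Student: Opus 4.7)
The plan is to verify that each $g_m$ and $f_{l,m}$ lies in $I(Y)$, identify the DegLex leading terms, and then check that every $S$-polynomial reduces to zero modulo the proposed set (Buchberger's criterion).

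Let $\alpha_a, \beta_a$ denote the $u$- and $v$-exponents of the image of $z_a$ under the toric parametrization; for $a \geq 1$ one has $\alpha_a + \beta_a = a$ with $\beta_a = 1$ exactly when $a$ is even. Membership in $I(Y)$ is then immediate: a binomial $z_a z_b - z_c z_d$ lies in $I(Y)$ whenever $a+b = c+d$ and the number of even indices in $\{a,b\}$ equals that in $\{c,d\}$, a condition that both parity shapes of $f_{l,m}$ satisfy by construction. For $g_m$, since $m$ and $m-2$ share parity, $\alpha_{m-2}+2 = \alpha_m$ and $\beta_{m-2} = \beta_m$, so $z_0^2 z_m$ and $z_1^2 z_{m-2}$ both map to $s^3 u^{\alpha_m} v^{\beta_m}$. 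Under DegLex with $z_0 > z_1 > \cdots > z_{2n}$, the leading term of $g_m$ is $z_0^2 z_m$ (it has exponent $2$ at the top variable $z_0$ while $z_1^2 z_{m-2}$ has exponent $0$ there), and the leading term of $f_{l,m}$ is $z_l z_m$, since $z_l$ is the smallest-indexed variable appearing and occurs in $z_l z_m$ but not in $z_{l+1} z_{m-1}$ or $z_{l+2} z_{m-2}$, even in the boundary case $m = l+4$ where the latter monomial degenerates to $z_{l+2}^2$.

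It then remains to verify Buchberger's criterion. By the second criterion, any pair of generators with coprime leading terms auto-reduces, disposing of all $(g_m, f_{l',m'})$ with $m \notin \{l',m'\}$ and all pairs $(f_{l,m}, f_{l',m'})$ with $\{l,m\}\cap\{l',m'\} = \varnothing$. The remaining cases split into three families: (a) $(g_m, g_{m'})$, whose $S$-polynomial equals $z_1^2(z_m z_{m'-2} - z_{m-2} z_{m'})$ and is therefore a scalar multiple of $f_{m-2,m'}$; (b) $(g_m, f_{l',m})$ and $(g_m, f_{m,m'})$, whose $S$-polynomial reduces in two steps, first by an adjacent $g_{m\pm 1}$ or $g_{m\pm 2}$ (the choice dictated by the parity of $l'+m$ or $m+m'$) and then by an $f$-generator; (c) overlapping $f$-pairs, whose $S$-polynomial reduces in one or several steps using further $f$-generators, as in the prototype $S(f_{1,4}, f_{4,7}) = z_1 z_5 z_6 - z_2 z_3 z_7 \longrightarrow -z_3\, f_{2,7}$ after reducing by $f_{1,5}$. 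The main obstacle is the bookkeeping across parity cases and boundary indices $l=1$ or $m \in \{2n-1,2n\}$, where the auxiliary generator one would like to invoke may formally lie outside the declared ranges and special reductions are required.

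A cleaner alternative bypasses Buchberger entirely: compute the Hilbert polynomial of $\CC[Y]$ as $\tfrac{4n-3}{2}\,d^2 + \tfrac{3}{2}\,d + 1$, using that $\deg Y = 4n-3$ is the normalized trapezoid volume from the main text, and count the standard monomials of the proposed initial ideal (those whose support in $\{z_1,\ldots,z_{2n}\}$ lies in a window of at most three consecutive indices, with the further restriction that if $z_0$ appears with exponent $\geq 2$ only $z_1, z_2$ are allowed) to obtain the same polynomial. Since the proposed initial monomials lie in $\mathrm{in}(I(Y))$ a priori, equality of Hilbert functions forces equality of the initial ideals, yielding the Gr\"obner basis property.
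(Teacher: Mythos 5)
Your verification that the $g_m$ and $f_{l,m}$ lie in $I(Y)$ and your identification of their DegLex leading terms are correct (though the identity $\alpha_a+\beta_a=a$ fails for even $a$, where the sum is $a-1$; the criterion you actually invoke --- equal index sums and equal counts of even indices --- is the right one). The problem is that neither of your two strategies is carried to completion, and what you omit is precisely the content of the lemma. In the Buchberger route, family (a) is already not quite right: for $m<m'$ of different parities with $m'\geq m+3$, the $S$-polynomial $z_1^2(z_mz_{m'-2}-z_{m-2}z_{m'})$ is \emph{not} a scalar multiple of $f_{m-2,m'}$ (which in that case equals $z_{m-2}z_{m'}-z_{m-1}z_{m'-1}$); it is a sum of two $f$'s and needs a two-step reduction. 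Families (b) and (c) are illustrated by a single prototype, and you explicitly concede that the parity and boundary cases are unresolved. An outlined case analysis with an acknowledged unhandled set of cases is not a proof. In the Hilbert-function route, the formula $\frac{4n-3}{2}d^2+\frac{3}{2}d+1$ is asserted, not derived: $\deg Y=4n-3$ only pins down the quadratic coefficient, and since the exponent set omits lattice points of the trapezoid (e.g.\ $(2,0)$ and $(1,1)$), the Hilbert function of $\CC[Y]$ is not the Ehrhart polynomial of the trapezoid, so the lower coefficients require a separate argument you do not give; the count of standard monomials is likewise not performed; and one needs agreement of the Hilbert \emph{functions} in every degree, not merely of the Hilbert polynomials for $d\gg0$.

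For comparison, the paper avoids $S$-pairs altogether by exploiting that $I(Y)$ is toric: it suffices to show that the leading term of every homogeneous binomial $p\in I(Y)$ is divisible by some $z_0^2z_m$ or some $z_lz_m$ with $m\geq l+3$. This is done by computing the kernel lattice of the exponent map (which shows the $z_0$-exponent of any binomial in $I(Y)$ is even) and by a $u$-degree comparison showing that a binomial whose first term is supported on three consecutive indices $z_l,z_{l+1},z_{l+2}$ cannot vanish under the parametrization. If you want to complete your argument, that single divisibility statement is the claim to prove; either of your routes could in principle be pushed through, but as written both stop short of it.
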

\begin{proof}
One can check directly that $g_m\circ  \phi = 0$ and $f_{l,m}\circ\phi = 0$ for all $l,m$. This shows $g_m\in I(Y)$ and $f_{l,m}\in I(Y)$. Observe that 
$$\mathrm{LT}(g_m) = z_0^2z_m \quad\text{and}\quad \mathrm{LT}(f_{l,m}) =z_lz_m.$$
Let $p \in I(Y)$ be an arbitrary homogeneous binomial. Denote by $l$ the smallest index of the variables in $p$. 
We show that 
\begin{enumerate}
    \item if $l=0$, then $\mathrm{LT}(g_m) \mid \mathrm{LT}(p)$ for some $m\geq l+3$;
    \item if $l\geq 1$, then $\mathrm{LT}(f_{l,m}) \mid \mathrm{LT}(p)$ for some $m\geq l+3$.
\end{enumerate} 
By construction, $\mathrm{LT}(p)$ depends on~$z_l$. 
The idea of the proof is to show that~$\mathrm{LT}(p)$ depends also on~$z_m$ with~$m \geq l + 3$, and that $z_0$ appears with an even power. Consequently, if $l \geq 1$, we then have $\mathrm{LT}(f_{l,m}) \mid \mathrm{LT}(p)$, and if $l=0$, since the power of $z_0$ in the leading term of $p$ is even, we have $\mathrm{LT}(g_{k}) \mid \mathrm{LT}(p)$.

We consider the map $\Phi\colon \ZZ^{2n+1} \rightarrow \ZZ^3$ defined by
$$\Phi(c_0, \ldots, c_{2n}) = c_0 
\begin{bmatrix}1\\0\\ 0\end{bmatrix} + c_1 \begin{bmatrix}1\\1\\ 0\end{bmatrix} + c_2 \begin{bmatrix}1\\0\\ 1\end{bmatrix} +  \cdots + c_{2n-1}\begin{bmatrix}1\\2n-1\\ 0\end{bmatrix}  + c_{2n} \begin{bmatrix}1\\2n-2\\ 1\end{bmatrix},$$
where vectors to the right are the exponents of the monomials defining the monomial map above. Then, e.g. by \cite[Theorem 2.19]{telenToric}, 
$$I(Y) = \langle z^{m_{+}} - z^{m_{-}} \ | \ m \in \ker \Phi\rangle.$$
Notice that $m = (m_0, \ldots, m_{2n}) \in \ker \Phi$, if and only if $(m_0, \ldots, m_{2n})$ is a solution of the system of linear equations:
\begin{align*}
0&= m_0 + \ldots + m_{2n}, \\
0&= m_2 + m_4 + m_6 + \ldots + m_{2n},\\
0&= m_1 + 3m_3 + 2m_4 + 5m_5 + \ldots + (2n-2)m_{2n}.
\end{align*}
This is equivalent to
\begin{align*}
m_0 &= 2m_3 + 2m_4 + 4m_5 + 4m_6 + \ldots +(2n-2)m_{2n-1}+(2n-2)m_{2n}, \\
m_1 &= -3m_3 - 2m_4 - 5m_5 - \ldots - (2n-2)m_{2n}, \\
m_2 &= -m_4 - m_6 - \ldots - m_{2n}.
\end{align*}
In particular $m_0$ is even, so the power of $z_0$ appearing in the generators of $I(Y)$ is always even.

It remains to show that $\mathrm{LT}(p)$ depends on~$z_m$ with~$m \geq l + 3$. Suppose that the contrary holds and that the leading term of $p$ only depends on $z_l$, $z_{l+1}$ and $z_{l+2}$. Then
$$p = z_l^az_{l+1}^bz_{l+2}^c - \prod\limits_{q \geq l+1} z_q^{d_q} \text{ for } a > 0 \text{ and } a + b + c = \sum\limits_{q \geq l+1} d_q.$$
First, we immediately obtain a contradiction when $l=0$. Indeed, in this case, the monomial $(z_l^az_{l+1}^bz_{l+2}^c)\circ \phi$ is of degree strictly smaller than $a+b+c$ and cannot equal $(\prod_{q \geq l+1} z_q^{d_q})\circ\phi$, which is of degree greater or equal to $\sum_{q \geq l+1}d_q=a+b+c$. Thus, from now on we suppose $l>0$.

We can assume that the variables occur only once in the binomial. Therefore,~$z_{l+1}$ and $z_{l+2}$ can occur in the second term only if $b = 0$ or $c = 0$, respectively.

In fact, the variable $z_{l+1}$ never occurs in the second term. Indeed, if $b=0$ and~$l$ is odd, note that
$v \nmid (z_l\circ \phi)^a(z_{l+2}\circ \phi)^c$ and $v \mid (z_{l+1} \circ \phi).$ This implies $q\geq l+2$, because $p\circ\phi$ must be identically zero.
On the other hand, when $b=0$ and $l$ is even, $v^{a+c} \mid (z_l\circ \phi)^a(z_{l+2}\circ \phi)^c$, and since $a + c = \sum d_q$ and $v$ only appears in $z_q$ with $q$ even, every $q$ must be even. Again we get $q\geq l +2$.

Let us now denote by $A$ the power of $u$ in $(z_l\circ \phi)^a(z_{l+1}\circ \phi)^b(z_{l+2}\circ \phi)^c$ and by~$B$ the power of $u$ in the second term $\prod_{q \geq l+2} (z_q\circ \phi)^{d_q}$. As $p\circ\phi=0$ we must have 
$$A=B.$$
If $l$ is odd, then
$$A:=la + (l-1)b + (l+2)c\quad\text{and} \quad B:=\sum\limits_{q\geq l+2 \text{ odd}} q \cdot d_q + \sum\limits_{q\geq l+2 \text{ even}} (q-2) \cdot d_q.$$
Since $v$ must appear in equal quantities in both terms, we have 
$b = \sum_{q\geq l+2 \text{ even}} d_q$.
Furthermore, from homogeneity we obtain $a + c = \sum_{q \geq l+2 \text{ odd}} d_q.$ Since $q \geq l+2$,
$$B =q(a+c) + (q-2)b \geq (l+2)(a+c) + lb > A,$$
which contradicts $A=B$.

Similarly, when $l$ is even we have
$$A:=(l-2)a + (l+1)b + lc\quad\text{and}\quad B:=\sum\limits_{q\geq l+2 \text{ odd}} q \cdot d_q + \sum\limits_{q\geq l+2 \text{ even}} (q-2) \cdot d_q.$$
Since $v$ must appear in equal quantities in both terms,
$a+c = \sum_{q\geq l+2 \text{ even}} d_q$, 
and from homogeneity we obtain $b = \sum_{q \text{ odd}} d_q.$ This yields
$$B = qb + (q-2)(a+c) \geq (l+2)b + l(a+c) > A,$$
which again contradicts $A=B$. Thus, $z_m$ for some $m\geq l$ appears in the leading term of $p$ and the proof is complete.
\end{proof}

\begin{lem}\label{subs:subdgens}\label{Section5.2}
The polynomials $\{sh_0,\ sh_1,\ sh_2,\ \ldots,\ sh_{2n}\}$ are a Khovanskii basis for  the subalgebra 
$$S:= \CC\left[sh_0,\ sh_1,\ sh_2,\ \ldots,\ sh_{2n}\right] \subset \CC[s, u, v].$$
\end{lem}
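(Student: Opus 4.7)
The plan is to invoke Theorem \ref{thmKavehManon} (Kaveh--Manon) using the explicit ideal generators for $I(Y)$ produced by Lemma \ref{subs:idealgens}. Concretely, it suffices to show that for each $g_m$ and each $f_{l,m}$ appearing in \eqref{gens}, the polynomial obtained by substituting $z_i \mapsto sh_i$ subducts to zero under $G := \{sh_0, sh_1, \ldots, sh_{2n}\}$ via Algorithm \ref{subduction}.

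For the binomials $f_{l,m}$, I expect the substitution to vanish identically. Using the explicit formulas $h_{2k-1} = u(u^2+v^2)^{k-1}$ and $h_{2k} = v(u^2+v^2)^{k-1}$, a short case split on the parities of $l$ and $m$ shows that the two monomial products comprising $f_{l,m}$ evaluate to the same expression in $u$, $v$, and $u^2+v^2$. For example, when $l = 2p-1$ and $m = 2q-1$ (so $l+m$ is even and $f_{l,m} = z_lz_m - z_{l+2}z_{m-2}$), both $h_lh_m$ and $h_{l+2}h_{m-2}$ collapse to $u^2(u^2+v^2)^{p+q-2}$; the three remaining parity combinations are handled in the same way. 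Thus $f_{l,m}(sh) = 0$ before the subduction algorithm even begins.

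For the binomials $g_m = z_0^2 z_m - z_1^2 z_{m-2}$, the substitution yields $s^3(h_m - u^2 h_{m-2})$. Here the key is the telescoping identity $(u^2+v^2) - u^2 = v^2$: when $m = 2k-1$ the expression simplifies to $s^3 u v^2 (u^2+v^2)^{k-2}$, and when $m = 2k$ it simplifies to $s^3 v^3 (u^2+v^2)^{k-2}$. In both cases a direct comparison shows the result equals $(sh_2)^2 \cdot sh_{m-2}$ exactly (not merely up to leading term), since $sh_2 = sv$ and $sh_{m-2}$ is precisely $su(u^2+v^2)^{k-2}$ or $sv(u^2+v^2)^{k-2}$ depending on parity. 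Consequently $\mathrm{LT}(g_m(sh)) = \mathrm{LT}(sh_2)^2 \cdot \mathrm{LT}(sh_{m-2})$, and one pass of Algorithm \ref{subduction} reduces $g_m(sh)$ to zero.

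No serious obstacle arises; the only mildly delicate point is the observation that the full polynomial $g_m(sh)$, and not just its leading term, coincides with a product of elements of $G$. This is what guarantees that the subduction terminates in a single step rather than requiring an inductive argument on the height of the Newton polytope. Once both families $\{g_m\}_{m=3}^{2n}$ and $\{f_{l,m}\}$ have been disposed of in this way, Theorem \ref{thmKavehManon} yields that $G$ is a Khovanskii basis for $S$, completing the proof.
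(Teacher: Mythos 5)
Your proposal is correct and follows essentially the same route as the paper's proof: both invoke Theorem~\ref{thmKavehManon} with the generators from Lemma~\ref{subs:idealgens}, observe that the $f_{l,m}$ vanish identically under substitution, and show that $g_m(sh_0,\ldots,sh_{2n})$ equals $(sh_2)^2\cdot sh_{m-2}$ exactly, so subduction terminates in one step. The parity case analysis and the telescoping identity $(u^2+v^2)-u^2=v^2$ are precisely the computations carried out in the paper.
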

\begin{proof}
In the previous lemma, we showed that $I(Y)$ is generated by the polynomials $\{ g_3,\ \ldots,\  g_{2n},\  f_{1,4},\  \ldots, f_{2n-3,2n}\}$ listed in \eqref{gens}. 
By Theorem \ref{thmKavehManon}, it suffices to check that for any~$p \in \{ g_3,\ \ldots,\  g_{2n},\  f_{1,4},\  \ldots, f_{2n-3,2n}\}$ the subduction algorithm (Algorithm \ref{alg}) reduces~$p(sh_0,\ldots, sh_{2n})$ to zero.
One can check directly that
$$f_{l,m}(sh_0,\ldots, sh_{2n}) = 0 \quad \text{ for all } 2n \geq m \geq l+3 \geq 4.$$
For the other polynomials, we have
$$g_m(sh_0,\ldots, sh_{2n}) = (sh_0)^2sh_m - (sh_1)^2(sh_{m-2}).$$
Here, we consider the cases $m$ even and odd separately. First, assume $m = 2k$ is even. In this case,
$sh_m = sv(u^2 + v^2)^{k-1}$ and $sh_{m-2} = sv(u^2 + v^2)^{k -2}$, so that
\begin{align*}
g_{2k}(sh_0,\ldots, sh_{2n}) 
&= s^3v(u^2 + v^2)^{k-1} - s^3u^2v(u^2 + v^2)^{k -2}\\
&= s^3v^3(u^2 + v^2)^{k -2}\\
&=(sv)^2\left( sv(u^2 + v^2)^{k-2}\right)\\
&=(sh_2)^2 \cdot sh_{2k-2},
\end{align*}
so that $g_{2k}(sh_0,\ldots, sh_{2n}) $ reduces to zero in one step of Algorithm \ref{alg}.

If $m = 2k + 1$, we have $sh_m = su(u^2 + v^2)^{k} $ and $sh_{m-2} = su(u^2 + v^2)^{k -1}.$
Hence,
\begin{align*}
g_m(sh_0,\ldots, sh_{2n}) 
&= s^3u(u^2 + v^2)^{k} - s^3u^3(u^2 + v^2)^{k -1}\\
&= s^3v^2u(u^2 + v^2)^{k -1}\\
&= (sv)^2\left( su(u^2 + v^2)^{k -1}\right)\\
&=(sh_2)^2 \cdot sh_{2k-1}.
\end{align*}
Also in this case Algorithm \ref{alg} takes one step to reduce $g_{2k}(sh_0,\ldots, sh_{2n}) $ to zero.
\end{proof}

\smallskip

\begin{rem}
In some situations researchers disregard the ``middle nonlinearities'' in the polynomials in \eqref{equations_one_oscillator} and identify some of the remaining coefficients obtaining the equations
\begin{align*}
au(u^2 + v^2)^{n-1} + bu + cv  &= 0, \\
av(u^2 + v^2)^{n-1} + dv - cu &= 0.
\end{align*}
Our proof shows that for general $a,b,c,d$ this system of equations also has $4n-3$ solutions over the complex numbers: the upper bound still works, since we have obtained the above system by specializing coefficients in \eqref{equations_one_oscillator}. It remains to note that the example used in proving the lower bound in Section \ref{proof_lower_bound} is a special case of the system above.

However, it is also possible to provide the basis of the corresponding toric ideal, subducting to zero. Let $Y \subset \PP^{4}$ be the projective toric variety obtained from the monomial map
$\phi\colon (s, u, v) \mapsto [{s}: {su}: {sv}: {su^{2n-1}}: {svu^{2n-2}}] \in \PP^{4}.$ Using coordinates $z_0,\ldots,z_4$ for $ \PP^{4}$ we have 
$$I(Y) = \langle z_1z_4 - z_2z_3,\ z_0^{2n-2}z_3 - z_1^{2n-1},\ z_0^{2n-2}z_4 - z_1^{2n-2}z_2\rangle.$$ For subduction, the first generator turns to zero after substitution of the tested polynomials. The other two can be checked by induction on $n$.

\end{rem}

\bigskip
\section{Multiple frequencies}\label{Section6}
In this section, we estimate the number of complex solutions for systems of polynomial equations that arise for $N=1$ resonator with $M>1$ frequencies. We obtain a lower bound for arbitrary $M$ and give an algorithm for computer verification of the upper bound for specific $M$.

Recall from (\ref{equations_M}) that for one resonator we have $2M$ equations in $2M$ variables $u = (u_1,...,u_M)$, $v= (v_1,...,v_M)$, which come in $M$ pairs:
$$
    \begin{aligned}
         & \begin{bmatrix}
p_1(u,v), &q_1(u,v),&\hdots ,&p_M(u,v), &q_M(u,v)    
\end{bmatrix}=0,\\
p_k(u,v)&= a_{k,0} + a_{k,1} u_k + c_k v_k + d_k u_k(u_k^2+v_k^2) + 2d_k \sum\limits_{j\neq k} u_k(u_j^2 + v_j^2),\\	
q_k(u,v)&= b_{k,0} - c_k u_k + b_{k,1} v_k + d_k v_k(u_k^2+v_k^2) + 2d_k \sum\limits_{j\neq k} v_k(u_j^2 + v_j^2).\\		
    \end{aligned}
$$
\begin{prop}
For general coefficients the system of equations above has at least $5^{MN}$ solutions. For $M=2$ and $N=1$ there are exactly $5^{MN}=25$ solutions.
\end{prop}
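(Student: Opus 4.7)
For the lower bound $\geq 5^{MN}=5^{M}$ (recall $N=1$ throughout Section~\ref{Section6}), the plan is to adapt the specialization strategy of Theorem~\ref{main_thm_HC}, while working around the fact that in~\eqref{equations_M} the cross-coupling coefficient $2d_k$ is tied to the self-interaction coefficient $d_k$, so the system cannot be decoupled by specializing within our parameter family. I would pass to an enlarged family in which the coefficients of the cross-terms $u_k\sum_{j\neq k}(u_j^2+v_j^2)$ and $v_k\sum_{j\neq k}(u_j^2+v_j^2)$ are treated as free parameters $e_k,f_k$ rather than being forced to equal $2d_k$. Setting $e_k=f_k=0$ decouples this enlarged system into $M$ independent single-resonator blocks of the type treated in Theorem~\ref{thm:count1} with $n=2$, each contributing exactly $5$ isolated complex solutions. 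By the semicontinuity principle used in Theorem~\ref{main_thm_HC} (a specialization can only decrease the number of isolated solutions), the generic enlarged-family system has at least $5^{M}$ isolated solutions; one then argues that the linear slice $e_k=f_k=2d_k$ meets the Zariski-open locus on which this generic count is attained, so the original family has at least $5^{M}$ isolated solutions for generic $(a,c,d,b)$.

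For the upper bound in the case $M=2$, $N=1$, I would apply Theorem~\ref{updatedBKK} to the $4$-equation system grouped into $r=2$ blocks of $k_1=k_2=2$ equations, where the $k$-th block is the pair $(p_k,q_k)$ and the associated polynomial set is
\[
\mathcal{A}_k=\{\,1,\ u_k,\ v_k,\ u_k R_k,\ v_k R_k\,\},\qquad R_k=(u_k^2+v_k^2)+2(u_{3-k}^2+v_{3-k}^2).
\]
The generic injectivity of $\phi$ and $\phi_{\mathrm{in}}$ required by the theorem follows by inspection: within each block, $u_k$ and $v_k$ are recovered, up to rescaling, from the second and third homogeneous coordinates of the $k$-th $\mathbf{P}^{4}$ factor. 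The principal obstacle is the Khovanskii-basis condition for $G=\bigcup_{k=1,2}\{s_k h\mid h\in\mathcal{A}_k\}$. Fixing a degree-compatible monomial order with $u_1\succ v_1\succ u_2\succ v_2$, the leading monomials are $\{1,u_1,v_1,u_1^3,u_1^2 v_1\}$ for $\mathcal{A}_1$ and $\{1,u_2,v_2,u_1^2 u_2,u_1^2 v_2\}$ for $\mathcal{A}_2$; I would then apply Theorem~\ref{thmKavehManon} by computing a set of binomial generators of the toric ideal $I(Y)$ of the closure of the image of $\phi_{\mathrm{in}}$ in $\mathbf{P}^{4}\times\mathbf{P}^{4}$ in Macaulay2, and running the subduction Algorithm~\ref{subduction} on each generator to verify that the remainder vanishes. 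Unlike Lemma~\ref{subs:idealgens}, the two blocks share variables through the cross-terms in $R_k$, so the Decoupling Theorem~\ref{thm: Decoupling} does not apply and the binomial generators of $I(Y)$ genuinely mix both blocks, which is where the bookkeeping becomes delicate.

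Once the Khovanskii-basis condition is verified, Theorem~\ref{updatedBKK} gives the upper bound $\mathrm{MV}(Q_1,Q_1,Q_2,Q_2)$, where $Q_k$ is the convex hull of the exponent vectors of the leading monomials of $\mathcal{A}_k$. This mixed volume is a finite integer computation, readily performed by the \texttt{mixed\_volume} command in \texttt{HomotopyContinuation.jl}; the expected output $25$ matches the lower bound from the first step and yields the exact count.
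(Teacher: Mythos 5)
Both halves of your plan have genuine gaps. For the lower bound, your specialization $e_k=f_k=0$ leaves the family under consideration: in \eqref{equations_M} the cross-term coefficient is forced to equal $2d_k$, so the decoupled system you analyze is a member of the \emph{enlarged} family only. Semicontinuity then tells you that the \emph{generic member of the enlarged family} has at least $5^M$ isolated solutions; it says nothing about the linear slice $e_k=f_k=2d_k$, which could a priori be entirely contained in the degenerate locus. The step "one then argues that the slice meets the Zariski-open locus" is exactly the missing content, and the only way to supply it is to exhibit an explicit witness \emph{on the slice} with $5^M$ isolated solutions. That is what the paper does: it sets $a_{k,1}=c_k=d_k=1$ (so the cross-coefficient is $2$, not $0$), keeps the coupling, and solves the resulting system by hand — substituting $u_k=v_k\bigl(r_k^2+2\sum_{j\neq k}r_j^2\bigr)$, reducing to the linear system $r_k^2+2\sum_{j\neq k}r_j^2=\tfrac{-1\pm i\sqrt3}{2}$ in the quantities $r_j^2$, and counting $\sum_{k=0}^{M}\binom{M}{k}4^k=5^M$.

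For the upper bound, your block structure with $r=2$, $k_1=k_2=2$ and $\mathcal{A}_k=\{1,u_k,v_k,u_kR_k,v_kR_k\}$ cannot work, and the failure is detectable before any Macaulay2 computation. With your order $u_1\succ v_1\succ u_2\succ v_2$ the leading terms of $u_1R_1,v_1R_1$ are $u_1^3,u_1^2v_1$, so $Q_1$ lies in the $(u_1,v_1)$-coordinate plane with normalized area $5$, while $\mathrm{LT}(u_2R_2)=u_1^2u_2$ and $\mathrm{LT}(v_2R_2)=u_1^2v_2$ give a $Q_2$ whose projection modulo that plane is the unit simplex. Hence $\mathrm{MV}(Q_1,Q_1,Q_2,Q_2)=5\cdot 1=5$ (equivalently: two generic equations supported on $Q_1$ involve only $u_1,v_1$ and have $5$ roots, after which the $Q_2$-equations are affine-linear in $u_2,v_2$). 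Since the true count is at least $25$, Theorem~\ref{updatedBKK} forces your $G$ \emph{not} to be a Khovanskii basis — the subduction you propose must leave a nonzero remainder; no choice of order repairs this, since one cannot have both $u_1^3\succ u_1u_2^2$ and $u_2^3\succ u_1^2u_2$. This is precisely the phenomenon of the paper's example with $s_1(x^3+xy^2)$, $s_2(x^2y+y^3)$. The paper's fix is a finer decomposition: $r=4$ singleton blocks, one per equation, with the self-cubic and cross-cubic terms kept as \emph{separate} generators (e.g.\ $u_1(u_1^2+v_1^2)$ and $u_1(u_2^2+v_2^2)$, contributing both $u_1^3$ and $u_1u_2^2$ to the polytope); the Khovanskii property is then verified with \texttt{SubalgebraBases} and $\mathrm{MV}(Q_1,Q_2,Q_3,Q_4)=25$.
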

\begin{rem}
The proof for the upper bound in the case $M=2$ and $N=1$ is computational and can be, in principle, repeated for any other fixed $M\geq 2$ and $N\geq1$ by the same strategy that we present in Section \ref{sec:upper_bound_M2} below. 

Verifying that polynomials of \eqref{equations_M} are a Khovanskii basis for arbitrary $M$ is a challenging task. We leave it for future work.
\end{rem}
\subsection{Lower bound}
For each $k=1,...,M$ let us set the coefficients $a_{k,1},c_k,d_k=1$, the rest equal to zero and denote $r_k^2 = u_k^2 + v_k^2$. We obtain the system with~$M$ blocks:
\begin{equation*}
k = 1,...,M\colon \quad
    \begin{aligned}
u_k + v_k + u_k (r_k^2 + 2 \sum\limits_{j\neq k} r_j^2 ) = 0,\\	
- u_k + v_k (r_k^2 + 2 \sum\limits_{j\neq k} r_j^2 ) = 0.\\		\end{aligned}
\end{equation*}
Plugging $u_k = v_k(r_k^2 + 2 \sum\limits_{j\neq k} r_j^2 )$ into the first equation, we get 
$$v_k(r_k^2 + 2 \sum\limits_{j\neq k} r_j^2 )+ v_k + v_k (r_k^2 + 2 \sum\limits_{j\neq k} r_j^2 )^2=0,$$ which implies $v_k=0$ or $(r_k^2 + 2 \sum\limits_{j\neq k} r_j^2 )=\frac{-1 \pm i\sqrt{3}}{2}$. If $v_k=0$ we have $u_k=0$ and $r_k=0$. In other case, we treat $r_1^2,...,r_M^2$ as variables and solve linear systems 
\begin{equation}\label{linear_sys}
    (r_k^2 + 2 \sum\limits_{j\neq k} r_j^2 )=\frac{-1 \pm i\sqrt{3}}{2},
\end{equation}
each of which has a unique solution. With fixed $r_1^2 = s_1,...,r_M^2 = s_M$ for every pair $(u_k,v_k)\neq (0,0)$ we obtain a system $u_k^2 + v_k^2 - s_k=v_k + u_k(s_k + 2 \sum\limits_{j\neq k}s_j)=0$ with~$2$~solutions (note that $s_k + 2 \sum_{j\neq k}s_j \neq \pm i$). In total, \eqref{linear_sys} gives us 4 solutions.

As a result, together we have 
$$\sum_{k=0}^M{M \choose k}4^k \cdot 1^{M-k} = 5^M$$ 
complex solutions of the system \eqref{equations_M}. Similarly to Section \ref{proof_lower_bound}, $5^{MN}$ is the lower bound for $N$ coupled resonators with multifrequency ansatz.

\subsection{Upper bound in the case $M=2$ and $N=1$}\label{sec:upper_bound_M2}

Consider first $N=1$ resonator with $M=2$ leading frequencies. Then, the corresponding algebraic equations are
\begin{align*}
\left(\frac{\omega_{0}^{2}-\omega_1^{2}}{2}-\frac{\lambda\omega_{0}^{2}}{4}\right)u_1 + \frac{\gamma \omega_1}{2} v_1 + \frac{3}{8} \alpha u_1(u_1^2 + v_1^2) + \frac{3}{4} \alpha u_1(u_2^2+v_2^2)& = 0, \\
\left(\frac{\omega_{0}^{2}-\omega_1^{2}}{2}+\frac{\lambda\omega_{0}^{2}}{4}\right)v_1 - \frac{\gamma \omega_1}{2} u_1 +\frac{3}{8} \alpha v_1(u_1^2 + v_1^2) +\frac{3}{4}\alpha v_1(u_2^2+v_2^2)& = 0,\\
\frac{\left(\omega_{0}^{2}-\omega_2^{2}\right)}{2}u_2 + \frac{\gamma \omega_2}{2}v_2 +\frac{3}{8} \alpha u_2(u_2^2 + v_2^2) + \frac{3}{4} \alpha u_2(u_1^2+v_1^2)& = 0,\\
\frac{\left(\omega_{0}^{2}-\omega_2^{2}\right)}{2}v_2 - \frac{\gamma \omega_2}{2}u_2 +\frac{3}{8} \alpha v_2(u_2^2 + v_2^2) +\frac{3}{4} \alpha v_2(u_1^2+v_1^2)& = 0.
\end{align*}
We apply the software \texttt{SubalgebraBases} in \texttt{Macaulay2} \cite{M2} to verify that 
\begin{align*}
    s_1,\ &s_1u_1,\  s_1v_1,\  s_1u_1(u_1^2 + v_1^2),\  s_1u_1(u_2^2+v_2^2),\ \\
    s_2,\  &s_2u_1,\  s_2v_1,\  s_2v_1(u_1^2 + v_1^2),\  s_2v_1(u_2^2+v_2^2),\ \\
    s_3,\  &s_3u_2,\  s_3v_2,\  s_3u_2(u_2^2 + v_2^2),\  s_3u_2(u_1^2+v_1^2),\  \\
    s_4,\  &s_4u_2,\  s_4v_2,\ s_4v_2(u_2^2 + v_2^2),\  s_4v_2(u_1^2+v_1^2)\\
\end{align*}
form a Khovanskii basis for the subalgebra they generate. By Theorem \ref{updatedBKK}, the number of complex solutions is bounded then by the mixed volumes of the corresponding polytopes. We use the programming language \texttt{Julia} \cite{Julia} and the package \texttt{MixedSubdivisions} \cite{MixedSubdivisions} to compute this mixed volume:
\begin{lstlisting}[language=Julia]
using MixedSubdivisions
A = [[0 1 0 3 1; 0 0 1 0 0; 0 0 0 0 2; 0 0 0 0 0],
     [0 1 0 2 0; 0 0 1 1 1; 0 0 0 0 2; 0 0 0 0 0],
     [0 0 0 0 2; 0 0 0 0 0; 0 1 0 3 1; 0 0 1 0 0],
     [0 0 0 0 2; 0 0 0 0 0; 0 1 0 2 0; 0 0 1 1 1]]
mixed_volume(A)
\end{lstlisting}
The answer is $25 = 5^{2}$. 
\hfill $\diamond$ 

\begin{rem}
   We note that the approach of Section \ref{Section5} in this case is not enough. Again using the package \texttt{SubalgebraBases} in \texttt{Macaulay2}, one can verify that the polynomials
\begin{align*}
    \{s, &su_1, sv_1,  su_1(u_1^2 + v_1^2), su_1(u_2^2+v_2^2), sv_1(u_1^2 + v_1^2), sv_1(u_2^2+v_2^2)\\
    &su_2, sv_2, su_2(u_1^2 + v_1^2), su_2(u_2^2+v_2^2), sv_2(u_1^2 + v_1^2), sv_2(u_2^2+v_2^2)\}
\end{align*}
form a Khovanskii basis for the subalgebra they generate. But the normalized volume of the corresponding polytope is 33, which overestimates the number of solutions.
\end{rem}

\section{Summary and outlook}
    We have proven a variety of results that provides a tighter bound than the BKK bound for specific families of polynomial systems. Such polynomials often appear when solving for the stationary motion of a plethora of differential equations ubiquitously appearing in natural sciences. The transition from differential equations to algebraic polynomial equations relies on the Harmonic Balance method \cite{Krack_2019, Luo_2012}. By obtaining the tighter bound in this work, we pave the way towards more efficient Homotopy Continuation schemes \cite{hb_julia_code} that will allow for the solution of additional harmonics and higher polynomial potentials.


\bibliography{bibML}
\bibliographystyle{plain}
\end{document}